\newtheorem{theorem}{Theorem}[section]
\newtheorem{lemma}[theorem]{Lemma}
\newtheorem{corollary}[theorem]{Corollary}
\theoremstyle{definition}
\theoremstyle{remark}
\numberwithin{equation}{section}
\def\u|{|\kern-0.1em|\kern-0.1em|}
\def\U|{\Big|\kern-0.1em\Big|\kern-0.1em\Big|}
\def\~{\hskip-2pt}
\def\<{\langle}
\def\>{\rangle}
\def\VV{\lower-0.1ex\hbox{$\ \begin{matrix}\vee\\[-2ex]\vee\end{matrix}\ $}}
\def\vv{\lower-0.2ex\hbox{$\ \begin{matrix}\wedge\\[-2ex]\wedge\end{matrix}\ $}}
\def\({\left(}
\def\){\right)}
\def\a{\alpha}
\def\b{\beta}
\def\u|{|\kern-0.1em|\kern-0.1em|}
\def\U|{\Big|\kern-0.1em\Big|\kern-0.1em\Big|}
\numberwithin{equation}{section}
\def\s{\sharp}
\def\phi{\varphi}
\newcommand{\NORM}[1]{\left|\!\left|{#1}\right|\!\right|}
\begin{document}


\title[Variants of Ando--Hiai  inequality for deformed means]{Variants of Ando--Hiai type inequalities for deformed means and applications}

\author[M. Kian, M. S. Moslehian, Y. Seo]{Mohsen Kian$^1$, M. S. Moslehian$^2$ \MakeLowercase{and} Yuki Seo$^3$}

\address{$^1$ Department of Mathematics, University of Bojnord, P.O. Box
1339, Bojnord 94531, Iran}
\email{kian@ub.ac.ir }

\address{$^2$ Department of Pure Mathematics, Center of Excellence in Analysis on Algebraic Structures (CEAAS), Ferdowsi University of Mashhad, P. O. Box 1159, Mashhad 91775, Iran.}
\email{moslehian@um.ac.ir; moslehian@yahoo.com}

\address{$^3$ Department of Mathematics Education, Osaka Kyoiku University, Asahigaoka, Kashiwara, Osaka 582-8582, Japan.}
 \email{yukis@cc.osaka-kyoiku.ac.jp}

\subjclass[2010]{Primary 47A64; Secondary 47A63, 47B65.}

\keywords{Ando--Hiai inequality; operator mean; deformed mean; operator power mean; Karcher mean; generalized Kantorovich constant; Specht ratio.}

\begin{abstract}
For an $n$-tuple of positive invertible operators on a Hilbert space, we present some variants of Ando--Hiai type inequalities for deformed means from an $n$-variable operator mean by an operator mean, which is related to the information monotonicity of a certain unital positive linear map. As an application, we investigate the monotonicity of the power mean from the deformed mean in terms of the generalized Kantorovich constants under the operator order. Moreover, we improve the norm inequality for the operator power means related to the Log-Euclidean mean in terms of the Specht ratio.
\end{abstract}

\maketitle

\section{Introduction}

In 2004, Ando et al. \cite{ALM} succeeded in the formulation of the geometric mean for $n\ (\geq 3)$ positive definite matrices, and they showed that it has many required properties as the geometric mean. Yamazaki \cite{Ya0} pointed out that it can be extended to the positive invertible operators on a Hilbert space. Since then, many researchers have studied operator geometric means of $n$ positive invertible operators on a Hilbert space. On the other hand, Moakher \cite{Mo} and then Bhatia and Holbrook \cite{BH} suggested a new definition of the geometric mean for $n$ positive definite matrices by taking the mean to be the unique minimizer of the sum of squares of distances. Computing appropriate derivatives as in \cite{Mo,B} yields that it coincides with the unique positive definite solution of the Karcher equation. The unique solution of the Karcher equation is called the Karcher mean of $n$ positive definite matrices. In 2012, Lim and P\'{a}lfia \cite{LP} constructed a family of matrix power means, each with numerous desirable properties such as monotonicity, that converges to the Karcher mean and showed that these properties are preserved in the limit. Moreover, in 2014, Lawson and Lim \cite{LL2} showed that the Karcher equation has a unique positive invertible solution in the infinite-dimensional setting.

The so-called Golden--Thompson inequality provides a relation between matrix exponential functions. In separate works, Golden \cite{Gol} and Thompson \cite{Th} proved that
 $\mathrm{Tr} e^{H+K}\leq  \mathrm{Tr} e^{H} e^{K}$ for all Hermitian matrices $H,K$.  An extension of this inequality asserts that  $\left\|e^{H+K}\right\|\leq\left\|e^{H/2}e^K e^{H/2}\right\|$ holds   for  every unitarily invariant norm.  As a complementary to the Golden--Thompson inequality,  Ando and Hiai \cite{AH} presented an inequality  for operator means of positive definite matrices.  It is called the Ando--Hiai inequality (see also \cite{FUJ, FK,WAD}): For each $\a \in (0,1]$
\[
A\ \s_{\a}\ B \leq I \qquad \Longrightarrow \qquad A^r\ \s_{\a}\ B^r \leq I \qquad \mbox{for all $r\geq 1$,}
\]
where the weighted geometric mean is defined by
\[
A\ \s_{\a}\ B = A^{1/2}(A^{-1/2}BA^{-1/2})^{\a}A^{1/2}
\]
for positive invertible operators $A$ and $B$.  The Ando--Hiai inequality is not only a significant inequality in the operator theory \cite{FMPS2}, but also plays an essential role in the quantum information theory, quantum statistics, and so on; see \cite{Furui2008, Se}.  The extension of this inequality to the Karcher mean was established by Yamazaki \cite{Ya1}: For each probability vector $\omega=(\omega_1,\ldots,\omega_n)$
\[
G_{\omega}(A_1,\ldots,A_n) \leq I \qquad \Longrightarrow \qquad G_{\omega}(A_1^r,\ldots,A_n^r)\leq I \qquad \mbox{for all $r\geq 1$,}
\]
where the Karcher mean $G_{\omega}(A_1,\ldots,A_n)$ for positive invertible operators $A_1,\ldots,A_n$ is defined to be the unique solution of the Karcher equation
\[
\sum_{j=1}^n \omega_j \log(X^{-1/2}A_jX^{-1/2})=0,
\]
and its modification for the operator power means was also shown by Wada \cite{Wa}.\par
 Recently, Hiai et al. \cite{HSW} utilized a fixed point method to derive      deformed mean of an $n$-variable mean by a $2$-variable operator mean.  They discussed various  Ando--Hiai type inequalities  for deformed means. For more information on the deformed means, the readers are referred to \cite{H2,HL,Ya2}. \par
In this paper, as a continuation of \cite{KMS}, for an $n$-tuple of positive invertible operators on a Hilbert space, we prove some variants of Ando--Hiai type inequalities for the deformed mean from an $n$-variable operator mean by an operator mean, which is related to the information monotonicity of a certain unital positive linear map.
\par As an application, we discuss the monotonicity of the power mean from the deformed mean in terms of the generalized Kantorovich constants under the operator order. Moreover, we improve the norm inequality for the operator power means related to the Log-Euclidean mean in terms of the Specht ratio.

\par
\medskip
\section{Deformed mean}

 Throughout the paper, $\mathbb{B}(\mathcal{H})$ is the $C^*$-algebra of all bounded linear operators on a Hilbert space $\mathcal{H}$; $\mathbb{B}(\mathcal{H})^+$ is the cone of positive operators in $\mathbb{B}(\mathcal{H})$; and ${\Bbb P}={\Bbb P}(\mathcal{H})$ is the set of positive invertible operators in $\mathbb{B}(\mathcal{H})$. For self-adjoint operators $X$ and $Y$, we write $Y\geq X$ (the operator order) if $Y-X$ is positive, and $Y>X$ if $Y-X$ is positive invertible. We denote by $\NORM{X}_{\infty}$ the operator norm of $X\in \mathbb{B}(\mathcal{H})$. Moreover, we denote by $I$   the identity operator on $\mathcal{H}$  and  by {\rm SOT}   the strong operator topology on $\mathbb{B}(\mathcal{H})$.\par
 The notion of $2$-variable operator means was introduced by Kubo and Ando \cite{KA} in an axiomatic way as follows: A map $\sigma:\mathbb{B}(\mathcal{H})^+ \times \mathbb{B}(\mathcal{H})^+ \mapsto \mathbb{B}(\mathcal{H})^+$ is called an operator mean if it satisfies the following properties:
\begin{enumerate}
\item[(i)] Monotonicity: $A\leq C, B\leq D \Longrightarrow A\ \sigma\ B \leq C\ \sigma\ D$.
\item[(ii)] Transformer inequality: $C(A\ \sigma\ B)C \leq (CAC)\ \sigma\ (CBC)$ for every $C\in \mathbb{B}(\mathcal{H})^+$.
\item[(iii)] Downward continuity: $A_k\searrow A, B_k\searrow B \Longrightarrow A_k\ \sigma\ B_k \searrow A\ \sigma\ B$, where $A_k \searrow A$ means that $A_1\geq A_2\geq \cdots$ and $A_k\to A$ in SOT.
\item[(iv)] Normalization: $I\ \sigma\ I=I$.
\end{enumerate}
\medskip
 The  most important result of \cite{KA}  gives a one-to-one   correspondence $\sigma \leftrightarrow f$ between   operator means $\sigma$ and   nonnegative operator monotone functions $f$ on $(0,\infty)$ with $f(1)=1$ via
\begin{align*}
& f(x)I=I\ \sigma\ (xI) \qquad \mbox{for $x>0$},\\
& A\ \sigma\ B = A^{1/2}f(A^{-1/2}BA^{-1/2})A^{1/2} \qquad \mbox{for $A,B\in {\Bbb P}.$}
\end{align*}
The corresponding operator monotone function $f$  to $\sigma$ is said to be the representing function of $\sigma$ and is denoted by $f_{\sigma}$.

To extend operator means to   several variables, Hiai et al.\cite{HSW} consider a map  $\mathfrak{M}:{\Bbb P}^n \mapsto {\Bbb P}$  as an $n$-variable operator mean  if it satisfies:
\begin{enumerate}
\item[(I)]\ {\it Monotonicity}: If $A_j,B_j \in {\Bbb P}$ and $A_j \leq B_j$ for $1\leq j\leq n$, then
\[
\mathfrak{M}(A_1,\ldots,A_n) \leq \mathfrak{M}(B_1,\ldots,B_n).
\]
\item[(II)]\ {\it Congruence invariance}: For every $A_1,\ldots,A_n \in {\Bbb P}$ and any invertible $S\in \mathbb{B}(\mathcal{H})$,
\[
S^*\mathfrak{M}(A_1,\ldots,A_n)S=\mathfrak{M}(S^*A_1S,\ldots,S^*A_nS).
\]
In particular, the homogeneity holds: $\mathfrak{M}(tA_1,\ldots,tA_n)=t\mathfrak{M}(A_1,\ldots,A_n)$ for $t>0$.
\item[(III)]\ {\it Monotone continuity}: Let $A_j, A_{jk} \in {\Bbb P}$ for $1\leq j\leq n$ and $k\in {\Bbb N}$. If either $A_{jk}\nearrow A_j$ or $A_{jk}\searrow A_j$ as $k\to \infty$ for each $j$, then
\[
\mathfrak{M}(A_{1k},\ldots,A_{nk}) \to \mathfrak{M}(A_1,\ldots,A_n)\quad \mbox{in {\rm SOT}.}
\]
\item[(IV)]\ {\it Normalized condition}: $\mathfrak{M}(I,\ldots,I)=I$.\\
\end{enumerate}

 Assume that $\sigma$ is a $2$-variable operator mean such that $\sigma$ is not the left trivial mean $\ell$ defined by   $A\ell B=A$. If  $\mathfrak{M}$ is an $n$-variable operator mean, then   the deformed mean $\mathfrak{M}_{\sigma}$ from $\mathfrak{M}$ by $\sigma$ is defined in \cite[Theorem 2.1]{HSW} to be the unique positive solution of the operator equation
\[
X=\mathfrak{M}(X\sigma A_1,\ldots,X\sigma A_n) \qquad \mbox{for $X\in {\Bbb P}$},
\]
 for all positive invertible operators $A_1,\ldots,A_n \in {\Bbb P}$,  or equivalently
\[
I=\mathfrak{M}(f_{\sigma}(X^{-1/2}A_1X^{-1/2}),\ldots,f_{\sigma}(X^{-1/2}A_nX^{-1/2})),
\]
where $f_{\sigma}$ is the representing function of $\sigma$. Then $\mathfrak{M}_{\sigma}:{\Bbb P}^n \mapsto {\Bbb P}$ is an $n$-variable operator mean satisfying (I)--(IV) again. Moreover, if $Y\in {\Bbb P}$ and $Y\leq \mathfrak{M}(Y\sigma A_1,\ldots,Y\sigma A_n)$, then $Y\leq \mathfrak{M}_{\sigma}(A_1,\ldots,A_n)$. If $Y\in {\Bbb P}$ and $Y\geq \mathfrak{M}(Y\sigma A_1,\ldots,Y\sigma A_n)$, then $Y\geq \mathfrak{M}_{\sigma}(A_1,\ldots,A_n)$.

For every $2$-variable operator mean $\sigma$, it associates another operator mean $\sigma^*$ defined by $A\sigma^* B=(A^{-1}\sigma B^{-1})^{-1}$. $\sigma^*$ is called the adjoint mean of $\sigma$. Similarly, for an $n$-variable mean $\mathfrak{M}$, the adjoint mean $\mathfrak{M}^*$   is defined by
\[
\mathfrak{M}^*(A_1,\ldots,A_n)=\mathfrak{M}(A_1^{-1},\ldots,A_n^{-1})^{-1}, \quad A_j\in {\Bbb P}.
\]
Clearly $\mathfrak{M}^*$ is itself a mean satisfying (I)--(IV) and $(\mathfrak{M}_{\sigma})^*=(\mathfrak{M}^*)_{\sigma^*}$   for any operator mean $\sigma \not= \ell$.\par

\medskip

 The following result shows that the deformed mean satisfies the information monotonicity, which is already shown under a more general setting by Hiai and Lim \cite{HL}, and under a unital case by P\'{a}lfia \cite{Pa}.
\begin{theorem} \label{thm-IM}
Let $\Phi:\mathbb{B}(\mathcal{H}) \mapsto \mathbb{B}(\mathcal{K})$ be a normal positive linear map such that $\Phi(I)$ is invertible and let $\sigma$ be a $2$-variable operator mean with $\sigma \not= \ell$, which is used in the related constructions to be sure that the corresponding monotone function $f_\sigma$ is strictly monotone on $(0,\infty)$. If $\mathfrak{M}:{\Bbb P}^n \mapsto {\Bbb P}$ is an $n$-variable operator mean with the information monotonicity
\begin{equation*} 
\Phi(\mathfrak{M}(A_1,\ldots,A_n)) \leq \mathfrak{M}(\Phi(A_1),\ldots,\Phi(A_n)),
\end{equation*}
then the deformed mean $\mathfrak{M}_{\sigma}$ satisfies the information monotonicity:
\begin{equation} \label{eq:im2}
\Phi(\mathfrak{M}_{\sigma}(A_1,\ldots,A_n)) \leq \mathfrak{M}_{\sigma}(\Phi(A_1),\ldots,\Phi(A_n)).
\end{equation}
\end{theorem}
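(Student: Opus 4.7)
The plan is to combine the fixed-point characterization of $\mathfrak{M}_{\sigma}$ with the information monotonicity of both the outer mean $\mathfrak{M}$ and the inner two-variable mean $\sigma$, and then close the loop via the subsolution property of $\mathfrak{M}_{\sigma}$ recalled in the paragraph preceding the theorem. I would start by setting $X:=\mathfrak{M}_{\sigma}(A_1,\ldots,A_n)\in{\Bbb P}(\mathcal{H})$, so that by definition $X=\mathfrak{M}(X\sigma A_1,\ldots,X\sigma A_n)$. Since $\Phi$ is positive and $\Phi(I)$ is invertible, $\Phi(X)$ lies in ${\Bbb P}(\mathcal{K})$, which is what is needed to feed it back into $\mathfrak{M}_\sigma$ on the right-hand side.

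Next, applying $\Phi$ to the fixed-point equation and invoking the assumed information monotonicity of $\mathfrak{M}$ yields
\[
\Phi(X)\leq \mathfrak{M}(\Phi(X\sigma A_1),\ldots,\Phi(X\sigma A_n)).
\]
The Ando--Kubo information monotonicity $\Phi(A\sigma B)\leq \Phi(A)\sigma \Phi(B)$ applied slot by slot, followed by monotonicity axiom~(I) for $\mathfrak{M}$, then gives
\[
\Phi(X)\leq \mathfrak{M}\bigl(\Phi(X)\sigma \Phi(A_1),\ldots,\Phi(X)\sigma \Phi(A_n)\bigr).
\]
Setting $Y:=\Phi(X)\in {\Bbb P}(\mathcal{K})$, this exhibits $Y$ as a subsolution of the fixed-point equation defining $\mathfrak{M}_{\sigma}(\Phi(A_1),\ldots,\Phi(A_n))$, and the subsolution property quoted in the excerpt immediately delivers $Y\leq \mathfrak{M}_{\sigma}(\Phi(A_1),\ldots,\Phi(A_n))$, which is \eqref{eq:im2}.

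The subtle point on which I expect to spend the most care is justifying $\Phi(A\sigma B)\leq \Phi(A)\sigma \Phi(B)$ when $\Phi$ is only assumed to be a normal positive linear map with $\Phi(I)$ invertible rather than unital. My plan is to pass to the unital positive linear map $\Psi(\,\cdot\,):=\Phi(I)^{-1/2}\Phi(\,\cdot\,)\Phi(I)^{-1/2}$, apply the classical Kubo--Ando inequality $\Psi(A\sigma B)\leq \Psi(A)\sigma \Psi(B)$ to $\Psi$, and then use the congruence identity $C(A\sigma B)C=(CAC)\sigma (CBC)$ with the invertible element $C:=\Phi(I)^{-1/2}$ to repackage the inequality back into the form needed for $\Phi$. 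Once this reduction is in place, the remainder of the argument is a clean three-line chain of inequalities feeding into the subsolution criterion, and no further appeal to strict monotonicity of $f_\sigma$ or to normality of $\Phi$ beyond what is already encoded in the setup of $\mathfrak{M}_\sigma$ itself is required.
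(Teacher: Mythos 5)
Your proof is correct. Note first that the paper itself gives no proof of Theorem~\ref{thm-IM}: it records the result as already known, citing Hiai--Lim \cite{HL} (general setting) and P\'alfia \cite{Pa} (unital case), where the inequality is extracted from the construction of the deformed mean via a fixed-point iteration, with normality of $\Phi$ used to pass to strong-operator limits. Your argument is a clean, self-contained alternative built only from facts the paper explicitly recalls from \cite{HSW}: the fixed-point equation $X=\mathfrak{M}(X\sigma A_1,\ldots,X\sigma A_n)$, the subsolution property ($Y\le \mathfrak{M}(Y\sigma A_1,\ldots,Y\sigma A_n)$ implies $Y\le \mathfrak{M}_{\sigma}(A_1,\ldots,A_n)$), and the classical Kubo--Ando information monotonicity $\Phi(A\sigma B)\le\Phi(A)\sigma\Phi(B)$ of two-variable means. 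The chain $\Phi(X)\le\mathfrak{M}(\Phi(X\sigma A_1),\ldots,\Phi(X\sigma A_n))\le\mathfrak{M}(\Phi(X)\sigma\Phi(A_1),\ldots,\Phi(X)\sigma\Phi(A_n))$ followed by the subsolution criterion is exactly right, and the criterion is legitimately applicable since $\Phi(X)\ge m\,\Phi(I)>0$ puts $\Phi(X)$ in ${\Bbb P}(\mathcal{K})$ (here you implicitly use that $\mathfrak{M}$, hence $\mathfrak{M}_\sigma$, is also defined on ${\Bbb P}(\mathcal{K})$, but that is already implicit in the statement of information monotonicity). Your reduction of the two-variable inequality to the unital case via $\Psi(\cdot)=\Phi(I)^{-1/2}\Phi(\cdot)\Phi(I)^{-1/2}$ plus the congruence identity is the standard one; the only slip is cosmetic: to transport the inequality for $\Psi$ back to $\Phi$ you conjugate by $C=\Phi(I)^{1/2}$, not $\Phi(I)^{-1/2}$ (equivalently, conjugating the target inequality by $\Phi(I)^{-1/2}$ reduces it to the $\Psi$-inequality). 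What your route buys is economy: normality of $\Phi$ and strict monotonicity of $f_\sigma$ are never invoked directly, being hidden inside the black-boxed existence and subsolution properties of $\mathfrak{M}_\sigma$ from \cite[Theorem 2.1]{HSW}, which is precisely where the cited references spend their effort.
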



 Let $\omega=(\omega_1,\ldots,\omega_n)$ be a probability vector, that is, $\omega_j\geq 0$ and $\sum_{j=1}^n \omega_j =1$. The weighted arithmetic mean $\mathcal{A}_{\omega}$ and the weighted harmonic mean $\mathcal{H}_{\omega}=(\mathcal{A}_{\omega})^*$ are defined by
\[
\mathcal{A}_{\omega}=\sum_{j=1}^n \omega_j A_j \quad \mbox{and}\quad \mathcal{H}_{\omega}=\left(\sum_{j=1}^n \omega_j A_j^{-1}\right)^{-1}
\]
for $A_1,\ldots,A_n \in {\Bbb P}$. Clearly $\mathcal{A}_{\omega}$ and $\mathcal{H}_{\omega}$ satisfy (I)--(IV).\par
Let  $\mathfrak{M}:{\Bbb P}^n \mapsto {\Bbb P}$ be an $n$-variable operator mean with
\begin{equation} \label{eq:HMA}
\mathcal{H}_{\omega} \leq \mathfrak{M} \leq \mathcal{A}_{\omega}
\end{equation}
for some probability vector $\omega=(\omega_1,\ldots,\omega_n)$. Then for any operator mean $\sigma$, it follows that $\mathfrak{M}_{\sigma}$ satisfies \eqref{eq:HMA} again: In fact, put $X=\mathfrak{M}_{\sigma}(A_1,\ldots,A_n)$. It follows from \cite[(3.3.2)]{H1} that $\a_0:=f_{\sigma}'(1) \in (0,1]$ and so $!_{\a_0}\leq \sigma \leq \nabla_{\a_0}$. Hence, the monotonicity of $\mathfrak{M}$ gives
\[
\mathfrak{M}(X\ !_{\a_0}\ A_1,\ldots,X\ !_{\a_0}\ A_n) \leq X \leq \mathfrak{M}(X\ \nabla_{\a_0}\ A_1,\ldots,X\ \nabla_{\a_0}\ A_n)
\]
and the assumption $\mathcal{H}_{\omega} \leq \mathfrak{M} \leq \mathcal{A}_{\omega}$ implies
\begin{equation} \label{eq:hma}
\mathcal{H}_{\omega} \leq \mathfrak{M}_{\sigma} \leq \mathcal{A}_{\omega}.
\end{equation}

If $\mathfrak{M}$ satisfies \eqref{eq:HMA}, then so does $\mathfrak{M}^*$, and hence $(\mathfrak{M}^*)_{\sigma}$ satisfies \eqref{eq:HMA}, too.\par

Replacing $A_j$ by $A_j^p$ for $p>0$ in \eqref{eq:hma} and taking the logarithm of both sides, we get
\[
\log\left( \sum_{j=1}^n \omega_j A_j^{-p}\right)^{-1/p} \leq \log \mathfrak{M}_{\sigma}(A_1^p,\ldots,A_n^p)^{1/p} \leq \log\left( \sum_{j=1}^n \omega_j A_j^p\right)^{1/p}\qquad \mbox{for all $p>0$},
\]
which gives the Lie--Trotter formula for the deformed mean $\mathfrak{M}_\sigma$ as
\begin{equation} \label{eq:LTF}
\lim_{p\to 0} \mathfrak{M}_{\sigma}(A_1^p,\ldots,A_n^p)^{1/p} = \exp \left( \sum_{j=1}^n \omega_j \log A_j \right).
\end{equation}

Let $\Phi:\mathbb{B}(\mathcal{H}) \mapsto \mathbb{B}(\mathcal{K})$ be a unital positive linear map and let $\omega=(\omega_1,\ldots,\omega_n)$ be a probability vector. Then a unital positive linear map $\Psi_{\Phi,\omega}: \mathbb{B}(\mathcal{H})\oplus \cdots \oplus \mathbb{B}(\mathcal{H}) \mapsto \mathbb{B}(\mathcal{K})$ is defined by
\begin{equation} \label{eq:uplm}
\Psi_{\Phi,\omega}(X)=\sum_{j=1}^n \omega_j \Phi(A_j)
\end{equation}
for $X=A_1\oplus\cdots \oplus A_n$ and $A_1,\ldots,A_n \in {\Bbb P}$. If $\Phi$ is the identity map, then we denote it by $\Psi_{\omega}=\Psi_{id,\omega}$.\par
In the next theorem, by the Mond--Pe\v{c}ari\'{c} method \cite[Chapter 1]{FMPS}, we show a reverse to the information monotonicity \eqref{eq:im2} without the information monotonicity condition for $\mathfrak{M}$:
\begin{theorem} \label{thm-RIM}
Let $A_1,\ldots,A_n \in {\Bbb P}$ such that $mI\leq A_j \leq MI$  \ $(j=1,\ldots,n)$ for some scalars $0<m<M$   and let $\Phi:\mathbb{B}(\mathcal{H}) \mapsto \mathbb{B}(\mathcal{K})$ be a unital positive linear map. Assume that $\mathfrak{M}:{\Bbb P}^n \mapsto {\Bbb P}$ is an $n$-variable operator mean satisfying $\mathcal{H}_{\omega} \leq \mathfrak{M} \leq \mathcal{A}_{\omega}$ for some probability vector $\omega=(\omega_1,\ldots,\omega_n)$. If  $\sigma$ is an operator mean, then
\begin{equation} \label{eq:RIM}
\mathfrak{M}_{\sigma}(\Phi(A_1),\ldots,\Phi(A_n))\leq \a \ \Phi(\mathfrak{M}_{\sigma}(A_1,\ldots,A_n))+\b(m,M,\a)I,
\end{equation}
 for each $\a >0$, where
\begin{eqnarray*}
\b( m,M,\a)=\left\{ \begin{array}{ll}
M+m-2\sqrt{\a Mm} & \ \mbox{if \ $m\leq \sqrt{\a Mm}\leq M$}, \\
(1-\a)m & \ \mbox{if \ $M\leq \sqrt{\a Mm}$}, \\
(1-\a)M & \ \mbox{if \ $\sqrt{\a Mm}\leq m$}. \\
\end{array} \right.
\end{eqnarray*}
\end{theorem}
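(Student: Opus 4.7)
The plan is to reduce the desired operator inequality to a scalar maximization problem via functional calculus, then assemble everything with linearity and positivity of $\Phi$. The key observation is that the information-like bound we want only needs to be verified for the two extreme means $\mathcal{A}_\omega$ and $\mathcal{H}_\omega$, thanks to the sandwich \eqref{eq:hma}.

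First I would exploit \eqref{eq:hma} applied both to the tuple $(A_1,\ldots,A_n)$ and to $(\Phi(A_1),\ldots,\Phi(A_n))$. Since $\mathcal{A}_\omega$ commutes with the linear map $\Phi$, this gives
\[
\mathfrak{M}_{\sigma}(\Phi(A_1),\ldots,\Phi(A_n))\le \mathcal{A}_{\omega}(\Phi(A_1),\ldots,\Phi(A_n))=\Phi(\mathcal{A}_{\omega}(A_1,\ldots,A_n)),
\]
and $\Phi(\mathfrak{M}_{\sigma}(A_1,\ldots,A_n))\ge \Phi(\mathcal{H}_{\omega}(A_1,\ldots,A_n))$. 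Multiplying the latter by $\alpha>0$ and subtracting, I reduce \eqref{eq:RIM} to proving the unmapped inequality
\[
\mathcal{A}_{\omega}(A_1,\ldots,A_n)-\alpha\,\mathcal{H}_{\omega}(A_1,\ldots,A_n)\le \beta(m,M,\alpha)\,I,
\]
because applying the unital positive linear map $\Phi$ then yields the right-hand side $\beta(m,M,\alpha)I$.

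Next, to produce the scalar bound, I would use the Kantorovich-type operator inequality $A_j+Mm\,A_j^{-1}\le (M+m)I$, which follows pointwise from $(t-m)(t-M)\le 0$ on $[m,M]$ by the functional calculus. Taking the $\omega$-convex combination gives
\[
\mathcal{A}_{\omega}(A_1,\ldots,A_n)+Mm\,\mathcal{H}_{\omega}(A_1,\ldots,A_n)^{-1}\le (M+m)\,I,
\]
so
\[
\mathcal{A}_{\omega}-\alpha\,\mathcal{H}_{\omega}\le (M+m)\,I-Mm\,\mathcal{H}_{\omega}^{-1}-\alpha\,\mathcal{H}_{\omega}.
\]
Since $\mathcal{H}_{\omega}$ has spectrum in $[m,M]$, the remaining task is to estimate the scalar function $h(t)=(M+m)-\tfrac{Mm}{t}-\alpha t$ on $[m,M]$ and apply the functional calculus.

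The main obstacle is precisely this three-case analysis, which produces the piecewise definition of $\beta$. Setting $h'(t)=0$ gives the critical point $t_0=\sqrt{Mm/\alpha}$. A brief case split based on where $t_0$ falls relative to $[m,M]$ yields: if $t_0\in[m,M]$, i.e.\ equivalently $m\le\sqrt{\alpha Mm}\le M$, then $\max h=h(t_0)=M+m-2\sqrt{\alpha Mm}$; if $t_0<m$, i.e.\ $M\le\sqrt{\alpha Mm}$, the function is decreasing on $[m,M]$ so the maximum is $h(m)=(1-\alpha)m$; if $t_0>M$, i.e.\ $\sqrt{\alpha Mm}\le m$, it is increasing so the maximum is $h(M)=(1-\alpha)M$. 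Each of these is exactly the corresponding piece of $\beta(m,M,\alpha)$. Plugging the operator inequality $h(\mathcal{H}_{\omega})\le \beta(m,M,\alpha)I$ back and applying $\Phi$ closes the argument.
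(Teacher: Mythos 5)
Your proof is correct, and it takes a genuinely different route from the paper's. Both arguments start from the sandwich $\mathcal{H}_{\omega}\leq\mathfrak{M}_{\sigma}\leq\mathcal{A}_{\omega}$ of \eqref{eq:hma}, but the paper keeps the map $\Phi$ inside throughout: it bounds $\Phi(\mathcal{H}_{\omega}(A_1,\ldots,A_n))$ from below by $\bigl(\sum_{j}\omega_j\Phi(A_j^{-1})\bigr)^{-1}$ using Choi's inequality $\Phi(Y^{-1})\geq\Phi(Y)^{-1}$, and then proves the Mond--Pe\v{c}ari\'{c}-type reverse inequality \eqref{pn2}, namely $\Psi_{\Phi,\omega}(X)-\alpha\,\Psi_{\Phi,\omega}(X^{-1})^{-1}\leq\beta(m,M,\alpha)I$ for the unital positive linear map $\Psi_{\Phi,\omega}$ of \eqref{eq:uplm}, via the chord bound $X^{-1}\leq-\frac{1}{Mm}X+\frac{M+m}{Mm}I$ and maximization of $F(t)=t-\alpha Mm/(M+m-t)$ on $[m,M]$. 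You instead eliminate $\Phi$ at the outset --- linearity lets $\Phi$ slide through $\mathcal{A}_{\omega}$ and positivity handles the $\mathcal{H}_{\omega}$ term --- reducing everything to the $\Phi$-free reverse AM--HM inequality $\mathcal{A}_{\omega}-\alpha\mathcal{H}_{\omega}\leq\beta(m,M,\alpha)I$. Your proof of that inequality mirrors the paper's chord trick in dual form: the paper bounds the harmonic-type term by a function of $\Psi_{\Phi,\omega}(X)$, whereas you bound $\mathcal{A}_{\omega}$ above by $(M+m)I-Mm\,\mathcal{H}_{\omega}^{-1}$, a function of $\mathcal{H}_{\omega}$ (Kantorovich's scalar inequality), after which all operators commute and scalar functional calculus finishes the job. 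Indeed your objective $h(t)=M+m-Mm/t-\alpha t$ satisfies $h(t)=F(M+m-Mm/t)$, and $t\mapsto M+m-Mm/t$ is a bijection of $[m,M]$, so the two optimizations are the same extremal problem and produce the identical constant $\beta$; your three-case analysis checks out against the statement. Your route is more elementary (no operator convexity or Choi inequality, no map-level machinery, only commutative functional calculus) and isolates a clean intermediate statement of independent interest. What the paper's formulation buys is uniformity: inequality \eqref{pn2} is essentially the $r=-1$ instance of Lemma \ref{lem-1} applied to $X^{-1}$, and that lemma is exactly the tool reused for the power-function versions in Theorems \ref{thm-IMAH} and \ref{k1}, where your linearity shortcut for $\mathcal{A}_{\omega}$ no longer applies verbatim and a new chord bound would be needed for each power.
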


\begin{proof}
By the convexity of $f(t)=t^{-1}$, we have $A^{-1} \leq -\frac{1}{Mm}A+\frac{M+m}{Mm}I$ and hence
\[
\Phi(A^{-1}) \leq -\frac{1}{Mm}\Phi(A)+\frac{M+m}{Mm}I.
\]
Therefore it follows that
\begin{align*}
 \Phi(A)-\a \ \Phi(A^{-1})^{-1} \leq \Phi(A)-\a \left(-\frac{1}{Mm}\Phi(A)+\frac{M+m}{Mm}I\right)^{-1}.
\end{align*}
Assume that the function $F(t)=t-\a\left( \frac{M+m-t}{Mm}\right)^{-1}$ is defined on $[m,M]$. Then it follows that $F'(t)=0$ has exactly one solution $t_0=M+m-\sqrt{\a Mm}$. From $F''(t_0)=2\sqrt{mM\alpha}>0$, we observe that if $m\leq t_0\leq M$, then $\b(m,M,\a)=\max_{m\leq t\leq M} F(t)=F(t_0)=M+m-2\sqrt{\a Mm}$. Moreover, $F''(t)>0$ implies that if $t_0\geq M$, then $\b(m,M,\a)=F(M)=(1-\a)M$ and if $t_0\leq m$, then $\b(m,M,\a)=f(m)=(1-\a)m$. Hence,
\begin{equation} \label{pn2}
 \Phi(A)-\a \ \Phi(A^{-1})^{-1} \leq \b(m,M,\a)I\quad \text{for each } \a >0.
\end{equation}

Since $\mathcal{H}_{\omega} \leq \mathfrak{M} \leq \mathcal{A}_{\omega}$ and $\sigma$ is an operator mean, we have $\mathcal{H}_{\omega} \leq \mathfrak{M}_{\sigma} \leq \mathcal{A}_{\omega}$ and so
\begin{align*}
 \mathfrak{M}_{\sigma}(\Phi(A_1),\ldots,\Phi(A_n))&- \a\ \Phi(\mathfrak{M}_{\sigma}(A_1,\ldots,A_n))\\
& \leq \sum_{j=1}^n \omega_j \Phi(A_j)-\a \ \Phi\left(\left(\sum_{j=1}^n \omega_j A_j^{-1}\right)^{-1}\right) \\
& \leq\sum_{j=1}^n \omega_j \Phi(A_j)-\a\ \left(\sum_{j=1}^n \omega_j \Phi\left(A_j^{-1}\right)\right)^{-1} \\
& = \Psi_{\Phi,\omega}(X)-\a \Psi_{\Phi,\omega}(X^{-1})^{-1} \\
& \leq \b(m,M,\a)I \qquad \mbox{by \eqref{pn2}},
\end{align*}
where $X=A_1\oplus \cdots \oplus A_n$ and $\Psi_{\Phi,\omega}$ is defined by \eqref{eq:uplm}.
\end{proof}

In particular, we have the following ratio type and difference type reverse inequalities of \eqref{eq:im2}.
\begin{corollary} \label{cor-1}
Under the assumptions as in Theorem~\ref{thm-RIM}, it follows that
\begin{equation} \label{eq:R}
\mathfrak{M}_{\sigma}(\Phi(A_1),\ldots,\Phi(A_n))\leq \frac{(M+m)^2}{4Mm} \Phi(\mathfrak{M}_{\sigma}(A_1,\ldots,A_n))
\end{equation}
and
\begin{equation} \label{eq:D}
\mathfrak{M}_{\sigma}(\Phi(A_1),\ldots,\Phi(A_n))\leq \Phi(\mathfrak{M}_{\sigma}(A_1,\ldots,A_n)) + \left( \sqrt{M}-\sqrt{m}\right)^2I.
\end{equation}
\end{corollary}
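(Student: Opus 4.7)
The plan is to derive both inequalities as immediate specializations of Theorem~\ref{thm-RIM}, choosing the free parameter $\alpha$ so that either the additive term $\beta(m,M,\alpha)I$ vanishes (giving the ratio type bound) or the multiplicative constant becomes $1$ (giving the difference type bound). In each case the main task is to verify that the chosen $\alpha$ lands in the branch $m\leq\sqrt{\alpha Mm}\leq M$ of the piecewise definition of $\beta$, so that the explicit formula $\beta(m,M,\alpha)=M+m-2\sqrt{\alpha Mm}$ applies.

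For \eqref{eq:R}, I would set $\beta(m,M,\alpha)=0$, which forces $M+m=2\sqrt{\alpha Mm}$, hence
\[
\alpha=\frac{(M+m)^2}{4Mm}.
\]
With this choice, $\sqrt{\alpha Mm}=(M+m)/2$, and since $m\leq M$ we indeed have $m\leq (M+m)/2\leq M$, so the first branch of $\beta$ applies. Substituting into \eqref{eq:RIM} yields \eqref{eq:R}.

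For \eqref{eq:D}, I would take $\alpha=1$. Then $\sqrt{\alpha Mm}=\sqrt{Mm}$ and the arithmetic--geometric mean inequality gives $m\leq\sqrt{Mm}\leq M$, so again the first branch applies, and
\[
\beta(m,M,1)=M+m-2\sqrt{Mm}=(\sqrt{M}-\sqrt{m})^{2}.
\]
Substituting into \eqref{eq:RIM} gives \eqref{eq:D}.

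There is no real obstacle here beyond the bookkeeping above; the corollary is a direct consequence of the scaling freedom in the parameter $\alpha$ of Theorem~\ref{thm-RIM}. The only thing to check carefully is the verification of the branch conditions for $\beta$, which I have addressed above. No further assumptions on $\mathfrak{M}$, $\sigma$, or $\Phi$ are needed beyond those already in Theorem~\ref{thm-RIM}.
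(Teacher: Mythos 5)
Your proposal is correct and follows exactly the paper's own argument: choose $\alpha$ so that $\beta(m,M,\alpha)=0$ to obtain the Kantorovich constant $\frac{(M+m)^2}{4Mm}$ for \eqref{eq:R}, and set $\alpha=1$ so that $\beta(m,M,1)=\left(\sqrt{M}-\sqrt{m}\right)^2$ for \eqref{eq:D}. Your explicit verification that both choices of $\alpha$ fall in the branch $m\leq\sqrt{\alpha Mm}\leq M$ is a small bit of care the paper leaves implicit, but the proof is otherwise identical.
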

\begin{proof}
If we choose $\a$ such that $\b(m,M,\a)=0$ in \eqref{eq:RIM}, then $\a$ coincides with the Kantorovich constant $\frac{(M+m)^2}{4Mm}$ and we have \eqref{eq:R}. If we put $\a =1$ in \eqref{eq:RIM}, then $\b(m,M,1)=(\sqrt{M}-\sqrt{m})^2$ and we have \eqref{eq:D}.
\end{proof}
\par
\medskip

\section{Ando--Hiai type inequalities}

 Assume that $\mathfrak{M}:{\Bbb P}^n \mapsto {\Bbb P}$ is an $n$-variable operator mean with conditions (I)--(IV) in previous section.  For $A_1,\ldots,A_n \in {\Bbb P}$ we consider Ando--Hiai type inequalities for $\mathfrak{M}$ as follows:
\begin{equation} \label{eq:AHM}
\mathfrak{M}(A_1^r,\ldots,A_n^r) \leq \NORM{\mathfrak{M}(A_1,\ldots,A_n)}_{\infty}^{r-1}\mathfrak{M}(A_1,\ldots,A_n) \quad \mbox{for all $r\geq 1$},
\end{equation}
and
\begin{equation} \label{eq:AHM2}
\mathfrak{M}(A_1^r,\ldots,A_n^r) \geq \NORM{\mathfrak{M}(A_1,\ldots,A_n)}_{\infty}^{r-1}\mathfrak{M}(A_1,\ldots,A_n) \quad \mbox{for all $0<r\leq 1$}.
\end{equation}
 For example, it follows from \cite[Lemma 3.2 and Corollary 3.4]{HSW} that the weighted harmonic mean $\mathcal{H}_{\omega}$ and the Karcher mean $G_{\omega}$ satisfy \eqref{eq:AHM}.\par
 A two variable operator mean  $\sigma$ is called power monotone increasing (p.m.i)
 if $f_{\sigma}(x^r)\geq f_{\sigma}(x)^r$ for all $x>0$ and $r\geq 1$, see \cite{Wa}. \par
 In \cite[Theorem 3.1]{HSW}, Hiai et al. showed several  Ando--Hiai type inequalities for $n$-variable operator means of   operators: Let $\sigma$ be a p.m.i operator mean with $\sigma \not= \ell$ and let $\mathfrak{M}:{\Bbb P}^n \mapsto {\Bbb P}$ be an $n$-variable operator mean. If $\mathfrak{M}$ satisfies \eqref{eq:AHM} (resp. \eqref{eq:AHM2}) for every $A_1,\ldots,A_n \in {\Bbb P}$, then $\mathfrak{M}_{\sigma}$ satisfies too.\par
 Though we have no relation between $\mathfrak{M}_{\sigma}(A_1^r,\ldots,A_n^r)$ and $\mathfrak{M}_{\sigma}(A_1,\ldots,A_n)^r$ under the operator order for all $r>0$ in general, Ando--Hiai type inequalities induce the following norm inequality: If $\mathfrak{M}$ satisfies \eqref{eq:AHM} (resp. \eqref{eq:AHM2}), then for any p.m.i operator means with $\sigma \not= \ell$, we have
\[
\NORM{\mathfrak{M}_{\sigma}(A_1^r,\ldots,A_n^r)}_{\infty} \leq \NORM{\mathfrak{M}_{\sigma}(A_1,\ldots,A_n)}_{\infty}^r \quad \mbox{for all $r\geq 1$}.
\]
(resp.
\[
\NORM{\mathfrak{M}_{\sigma}(A_1^r,\ldots,A_n^r)}_{\infty} \geq \NORM{\mathfrak{M}_{\sigma}(A_1,\ldots,A_n)}_{\infty}^r \quad \mbox{for all $0<r\leq 1$}.)
\]
 In this section, for every unital positive linear map $\Phi$ and every operator mean $\sigma$, we estimate the operator order relations between two deformed means $\mathfrak{M}_{\sigma}(\Phi(A_1^r),\ldots,\Phi(A_n^r))$ and $\Phi(\mathfrak{M}_{\sigma}(A_1,\ldots,A_n)^r)$ for $r>0$ in terms of the generalized Kantorovich constant.\par

To give our main results, we need some preliminaries. We recall that a unital positive linear mapping $\Phi$ satisfies the Davis--Choi--Jensen type inequality
\begin{align}\label{pnn}
 \Phi(X^r)\leq \Phi(X)^r
\end{align}
for every $r\in(0,1)$ and $X\in\mathbb{P}$. If $r\in(-1,0)\cup(1,2)$, then a reverse inequality holds in \eqref{pnn}. Counterpart to this, we recall the next lemma, which shows some upper and lower bounds for the difference and ratio of Jensen type inequalities for power functions.
\begin{lemma}{\cite[Theorem 3.16]{FMPS}} \label{lem-1}
Let $\Phi:\mathbb{B}(\mathcal{H})\mapsto \mathbb{B}(\mathcal{K})$ be a unital positive linear map and let $X\in {\Bbb P}$ be a positive invertible operator such that $mI \leq X \leq MI$ for some scalars $0<m<M$. If $r\in {\Bbb R}\backslash [0,1]$\ $($resp. $r\in (0,1)$ $)$, then
\[
\Phi(X^r)-\a \Phi(X)^r\leq \gamma(m,M,r,\a)I \quad (resp.\quad \Phi(X^r)-\a \Phi(X)^r \geq \gamma(m,M,r,\a)I)
\]
for each $\a >0$, where
\begin{eqnarray} \label{eq:rmMa}
\gamma(m,M,r,\a)=\left\{ \begin{array}{ll}
\a (r-1)\left( \frac{M^r-m^r}{\a r(M-m)}\right)^{r/(r-1)} +\frac{Mm^r-mM^r}{M-m} & \\
\qquad \qquad \qquad \qquad \quad \mbox{if} \quad m\leq \left( \frac{M^r-m^r}{\a r(M-m)}\right)^{1/(r-1)}\leq M, & \\
 (1-\a)M^r \qquad \qquad \mbox{if}\quad M\leq \left( \frac{M^r-m^r}{\a r(M-m)}\right)^{1/(r-1)}, & \\
(1-\a)m^r \qquad \qquad \mbox{if} \quad \left( \frac{M^r-m^r}{\a r(M-m)}\right)^{1/(r-1)}\leq m. & \\
\end{array} \right.
\end{eqnarray}
\end{lemma}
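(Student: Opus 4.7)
The plan is to imitate the Mond--Pe\v{c}ari\'{c} argument already used in the proof of Theorem~\ref{thm-RIM}, but with the convex function $t^{-1}$ replaced by the power function $t^{r}$. The core idea is to bound $t^r$ on $[m,M]$ by its chord, apply the unital positive linear map $\Phi$ to kill the operator variable, and then reduce the desired operator inequality to a one-dimensional extremum problem on $[m,M]$.

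First, I would exploit the convexity/concavity dichotomy of the power function. For $r\in\mathbb{R}\setminus[0,1]$, $t\mapsto t^r$ is convex on $[m,M]$, so
\[
t^r \;\leq\; L(t) \;:=\; \frac{M^r-m^r}{M-m}\,t + \frac{Mm^r-mM^r}{M-m},\qquad t\in[m,M],
\]
while for $r\in(0,1)$ the inequality is reversed by concavity. Applying the functional calculus to $X$ (whose spectrum lies in $[m,M]$) and then the positive linear map $\Phi$ yields
\[
\Phi(X^r)\ \leq\ L(\Phi(X)) \qquad\text{(resp.\ }\Phi(X^r)\ \geq\ L(\Phi(X))\text{)}.
\]
Writing $Y:=\Phi(X)$ and observing that $mI\leq Y\leq MI$ because $\Phi$ is unital and positive, subtracting $\alpha\Phi(X)^r=\alpha Y^r$ gives
\[
\Phi(X^r)-\alpha\Phi(X)^r \ \leq\ L(Y)-\alpha Y^r \ =\ F(Y),\qquad F(t):=L(t)-\alpha t^r,
\]
(with the reverse inequality in the $r\in(0,1)$ case). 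Since the spectrum of $Y$ lies in $[m,M]$, it is now enough to bound $\max_{[m,M]}F$ in the convex case (resp.\ $\min_{[m,M]}F$ in the concave case) by the scalar $\gamma(m,M,r,\alpha)$.

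The remaining scalar calculation is a routine calculus exercise. From $F'(t)=\frac{M^r-m^r}{M-m}-\alpha r t^{r-1}$, the unique critical point is
\[
t_0=\left(\frac{M^r-m^r}{\alpha r(M-m)}\right)^{1/(r-1)},
\]
and $F''(t)=-\alpha r(r-1)t^{r-2}$ has constant sign on $(0,\infty)$: $F$ is concave for $r\in\mathbb{R}\setminus[0,1]$ (so the extremum at $t_0$ is a maximum) and convex for $r\in(0,1)$ (so $t_0$ is a minimum). This is precisely the sign matching needed so that the two regimes of $r$ are governed by the same closed-form expression. A straightforward computation gives $F(m)=(1-\alpha)m^r$ and $F(M)=(1-\alpha)M^r$; when $t_0\in[m,M]$, the relation $\alpha r t_0^{r-1}=\frac{M^r-m^r}{M-m}$ collapses $F(t_0)$ to $\alpha(r-1)t_0^r+\frac{Mm^r-mM^r}{M-m}$. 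Splitting into the three cases $t_0\in[m,M]$, $t_0\geq M$, $t_0\leq m$ then reproduces the piecewise formula \eqref{eq:rmMa} in both regimes.

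The main obstacle is purely bookkeeping: one must simultaneously flip the chord inequality (convex vs.\ concave $t^r$) and the nature of the extremum of $F$ (max vs.\ min), and check that both switches cancel so the single formula for $\gamma(m,M,r,\alpha)$ works in both directions. Beyond that, everything reduces to the positivity of $\Phi$, the spectral inclusion $\sigma(Y)\subseteq[m,M]$, and elementary single-variable calculus.
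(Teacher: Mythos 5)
Your proof is correct and is exactly the standard Mond--Pe\v{c}ari\'{c} argument: the paper itself does not reprove this lemma (it cites \cite[Theorem 3.16]{FMPS}), but the cited proof and the paper's own proof of Theorem~\ref{thm-RIM} proceed by the same scheme you use --- bound the convex (resp.\ concave) power $t^r$ by its chord, apply $\Phi$, and reduce to maximizing (resp.\ minimizing) $F(t)=L(t)-\a t^r$ on $[m,M]$. Your case analysis at $t_0$, $m$, $M$ and the sign bookkeeping for the two regimes of $r$ are all accurate, so nothing is missing.
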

Two special cases of Lemma \ref{lem-1} read as the next lemma. If $\a =1$, then we write $\gamma(m,M,r)=\gamma(m,M,r,1)$. Moreover if we  choose $\a$ such that $\gamma(m,M,r,\a)=0$ in \eqref{eq:rmMa}, then $\a=K(h,r)$, where
\begin{equation} \label{eq:K}
K(h,p) = \frac{h^p-h}{(p-1)(h-1)} \left( \frac{p-1}{p} \frac{h^p-1}{h^p-h} \right)^p\qquad \mbox{for $p\in {\Bbb R}$ and $h>0$},
\end{equation}
is called the generalized Kantorovich constant and $h=\frac{M}{m}$, see \cite[Definition 2.2]{FMPS}. In particular, $K(h,2)=K(h,-1)=\frac{(h+1)^2}{4h}$ is called the Kantorovich constant.
\begin{lemma}\label{lem-p1}
 With the assumptions as in Lemma \ref{lem-1}, it follows that
\begin{align*}
\Phi(X^r)-\Phi(X)^r \leq \gamma(m,M,r)I \qquad \mbox{and}\qquad \Phi(X^r) \leq K(h,r) \Phi(X)^r
\end{align*}
hold for all $r\in {\Bbb R}\backslash [0,1]$. If $r\in(0,1)$, then reverse inequalities hold.
\end{lemma}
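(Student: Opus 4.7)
The statement is exactly two specializations of Lemma \ref{lem-1}, so the plan is to extract it by two particular choices of the free parameter $\alpha$ and to identify the resulting constants.

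For the first inequality, I would simply put $\alpha = 1$ in Lemma \ref{lem-1}. By the notational convention $\gamma(m,M,r) := \gamma(m,M,r,1)$, this yields $\Phi(X^r) - \Phi(X)^r \leq \gamma(m,M,r)I$ for $r \in \mathbb{R}\setminus[0,1]$, together with the reverse inequality for $r \in (0,1)$. Nothing further is required.

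For the second inequality, I would select the $\alpha > 0$ for which the additive constant in \eqref{eq:rmMa} vanishes; with that choice Lemma \ref{lem-1} reads $\Phi(X^r) \leq \alpha\, \Phi(X)^r$ (respectively $\geq$). The task is then to verify that this distinguished $\alpha$ coincides with $K(h,r)$ for $h = M/m$. I would work inside the first (non-degenerate) branch of \eqref{eq:rmMa} and solve
\[
\alpha(r-1)\left(\frac{M^r - m^r}{\alpha r(M - m)}\right)^{r/(r-1)} + \frac{Mm^r - mM^r}{M - m} = 0
\]
for $\alpha$. Writing $h = M/m$, a standard rearrangement (isolate the bracketed term, raise to the power $(r-1)/r$, and factor out appropriate powers of $m$) reduces this to
\[
\alpha \;=\; \frac{h^r - h}{(r-1)(h-1)}\left(\frac{r-1}{r}\,\frac{h^r - 1}{h^r - h}\right)^{r},
\]
which is precisely $K(h,r)$ in \eqref{eq:K}. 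I would then briefly check that the value $t^* = \left(\frac{M^r - m^r}{\alpha r (M-m)}\right)^{1/(r-1)}$ obtained at $\alpha = K(h,r)$ lies in $[m,M]$, so that we are genuinely in the first branch; this follows from the strict convexity (resp. concavity) of $t \mapsto t^r$ on $[m,M]$ for $r \notin [0,1]$ (resp. $r \in (0,1)$), which is the same mean value argument used to produce $\gamma(m,M,r,\alpha)$ in Lemma \ref{lem-1}.

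The only nontrivial step is the algebraic identification of the root $\alpha$ with $K(h,r)$; the rest is immediate from Lemma \ref{lem-1}. The sign of $r - 1$ distinguishes the two regimes $r \in \mathbb{R}\setminus[0,1]$ and $r \in (0,1)$, and in both cases the direction of the inequality is dictated directly by Lemma \ref{lem-1}, so no separate case analysis beyond that of the parent lemma is needed.
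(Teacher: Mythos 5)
Your proposal is correct and takes essentially the same route as the paper, which presents this lemma as ``two special cases'' of Lemma \ref{lem-1}: setting $\alpha=1$ (with the convention $\gamma(m,M,r)=\gamma(m,M,r,1)$) for the difference inequality, and choosing $\alpha$ so that $\gamma(m,M,r,\alpha)=0$, which forces $\alpha=K(h,r)$, for the ratio inequality. The explicit algebraic identification of that root with $K(h,r)$ and the check that the critical point lies in $[m,M]$ are details the paper leaves implicit (citing \cite[Definition 2.2]{FMPS}), and your versions of them are sound.
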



In the next theorem, we give an Ando--Hiai type inequality for deformed means as well as a difference counterpart to the information monotonicity.

\begin{theorem} \label{thm-IMAH}
Assume that $\mathfrak{M}:{\Bbb P}^n \mapsto {\Bbb P}$ is an $n$-variable operator mean satisfies $\mathcal{H}_{\omega} \leq \mathfrak{M}\leq \mathcal{A}_{\omega}$ for some probability vector $\omega=(\omega_1,\ldots,\omega_n)$ and that $\sigma$ is an operator mean. If  $A_1,\ldots,A_n \in {\Bbb P}$ such that $mI\leq A_j \leq MI$  \ $(j=1,\ldots,n)$  for some scalars $0<m<M$, then for each $\a >0$
\begin{equation} \label{eq:imah}
\mathfrak{M}_{\sigma}(\Phi(A_1^r),\ldots,\Phi(A_n^r)) \leq \a \Phi(\mathfrak{M}_{\sigma}(A_1,\ldots,A_n)^r) + \gamma(1/M,1/m,-r,\a)I
\end{equation}
for all $r\in (0,1]$ and all unital positive linear maps   $\Phi$, where $\gamma(m,M,r,\a)$ is defined by \eqref{eq:rmMa}.
\end{theorem}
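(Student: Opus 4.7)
The plan is to reduce \eqref{eq:imah} to Lemma \ref{lem-1} applied to the ampliation $\Psi_{\Phi,\omega}$ of \eqref{eq:uplm}. The strategy closely parallels the proof of Theorem \ref{thm-RIM}: upper-bound the left-hand deformed mean by $\Psi_{\Phi,\omega}(X^r)$, lower-bound $\Phi(\mathfrak{M}_{\sigma}(A_1,\ldots,A_n)^r)$ by $\Psi_{\Phi,\omega}(X^{-1})^{-r}$, and close the gap via a single Mond--Pe\v{c}ari\'{c}-type estimate, with the exponent $-r$ now playing the role that $-1$ played in Theorem \ref{thm-RIM}.

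First, I set $X=A_1\oplus\cdots\oplus A_n$, so that $mI\leq X\leq MI$. Since \eqref{eq:hma} delivers $\mathcal{H}_{\omega}\leq \mathfrak{M}_{\sigma}\leq \mathcal{A}_{\omega}$, the arithmetic upper bound gives at once
\[
\mathfrak{M}_{\sigma}(\Phi(A_1^r),\ldots,\Phi(A_n^r))\leq \sum_{j=1}^{n}\omega_{j}\Phi(A_j^r)=\Psi_{\Phi,\omega}(X^r).
\]
For the opposite direction, the harmonic lower bound together with the L\"owner--Heinz operator monotonicity of $t\mapsto t^r$ on $(0,\infty)$ (valid because $r\in(0,1]$) yields
\[
\mathfrak{M}_{\sigma}(A_1,\ldots,A_n)^r\geq \Big(\sum_{j=1}^n\omega_j A_j^{-1}\Big)^{-r}.
\]
Because $-r\in[-1,0]$, the function $t\mapsto t^{-r}$ is operator convex on $(0,\infty)$, so the Choi--Davis--Jensen inequality applied to the unital positive linear map $\Phi$ gives
\[
\Phi(\mathfrak{M}_{\sigma}(A_1,\ldots,A_n)^r)\geq \Phi\Big(\Big(\sum_j\omega_j A_j^{-1}\Big)^{-r}\Big)\geq \Big(\sum_j\omega_j\Phi(A_j^{-1})\Big)^{-r}=\Psi_{\Phi,\omega}(X^{-1})^{-r}.
\]

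Combining these two bounds and multiplying the second by $\a>0$, I obtain
\[
\mathfrak{M}_{\sigma}(\Phi(A_1^r),\ldots,\Phi(A_n^r))-\a\Phi(\mathfrak{M}_{\sigma}(A_1,\ldots,A_n)^r)\leq \Psi_{\Phi,\omega}(X^r)-\a\Psi_{\Phi,\omega}(X^{-1})^{-r}.
\]
Setting $Y=X^{-1}$ rewrites the right-hand side as $\Psi_{\Phi,\omega}(Y^{-r})-\a\Psi_{\Phi,\omega}(Y)^{-r}$; one has $(1/M)I\leq Y\leq (1/m)I$, and the exponent $-r$ sits in $[-1,0)\subset{\Bbb R}\setminus[0,1]$. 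Hence a single application of Lemma \ref{lem-1} to the unital positive linear map $\Psi_{\Phi,\omega}$, the positive invertible operator $Y$, and the exponent $-r$ with parameter $\a$ bounds this above by $\gamma(1/M,1/m,-r,\a)I$, which is exactly the scalar term in \eqref{eq:imah}.

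The only care point I anticipate is keeping the various Jensen-type directions straight: $t^r$ is operator monotone for $r\in(0,1]$, $t^{-r}$ is operator convex on the same range so that the Choi--Davis--Jensen step runs in the favorable direction, and the exponent $-r$ lies outside $[0,1]$ so that the ``$\leq$'' branch of Lemma \ref{lem-1} applies rather than the reversed one. Beyond this bookkeeping, no delicate calculation is required, and the argument is a structural analogue of the one used for Theorem \ref{thm-RIM}.
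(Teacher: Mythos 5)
Your proposal is correct and follows essentially the same route as the paper's own proof: bound the deformed mean above by $\mathcal{A}_{\omega}$, bound $\Phi(\mathfrak{M}_{\sigma}(A_1,\ldots,A_n)^r)$ below via $\mathcal{H}_{\omega}$, L\"{o}wner--Heinz, and operator convexity of $t\mapsto t^{-r}$, and then apply Lemma~\ref{lem-1} to $\Psi_{\Phi,\omega}$ at $X^{-1}$ with exponent $-r\in[-1,0)$. Your substitution $Y=X^{-1}$ is merely a notational repackaging of the paper's identity $\Psi_{\Phi,\omega}(X^r)=\Psi_{\Phi,\omega}((X^{-1})^{-r})$, so the two arguments coincide step for step.
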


\begin{proof}
Since $f_{\sigma}'(1)\in (0,1]$ and $\mathcal{H}_{\omega} \leq \mathfrak{M} \leq \mathcal{A}_{\omega}$, we have $\mathcal{H}_{\omega} \leq \mathfrak{M}_{\sigma} \leq \mathcal{A}_{\omega}$. Since $t\mapsto t^r$ is operator monotone for $r\in (0,1]$, we get $\mathcal{H}_{\omega}^r \leq \mathfrak{M}_{\sigma}^r$ and so
\begin{align}\label{pn3}
 \Phi(\mathfrak{M}_{\sigma}^r)\geq \Phi\left(\mathcal{H}_{\omega}^r\right)=\Phi\left(\left(\sum_{j=1}^n \omega_jA_j^{-1}\right)^{-r}\right)\geq \Phi\left(\sum_{j=1}^n \omega_jA_j^{-1}\right)^{-r}=\left(\sum_{j=1}^n \omega_j\Phi(A_j^{-1})\right)^{-r},
\end{align}
where the right inequality follows from the operator convexity of $t\mapsto t^{-r}$ for $-r\in [-1,0)$. Then it follows that
\begin{align*}
 &\mathfrak{M}_{\sigma}(\Phi(A_1^r),\ldots,\Phi(A_n^r)) - \a \Phi(\mathfrak{M}_{\sigma}(A_1,\ldots,A_n)^r) \\
 &\qquad\qquad\qquad \leq \sum_{j=1}^n \omega_j \Phi(A_j^r)-\a \Phi(\mathfrak{M}_{\sigma}(A_1,\ldots,A_n)^r) \qquad (\mbox{by $\mathfrak{M}_{\sigma} \leq \mathcal{A}_{\omega}$})\\
&\qquad\qquad\qquad \leq \sum_{j=1}^n \omega_j \Phi\left((A_j^{-1})^{-r}\right)- \a \left(\sum_{j=1}^n \omega_j\Phi(A_j^{-1})\right)^{-r} \qquad (\mbox{by \eqref{pn3}})\\
&\qquad\qquad\qquad = \Psi_{\Phi,\omega}((X^{-1})^{-r})-\a \Psi_{\Phi,\omega}(X^{-1})^{-r} \\
&\qquad\qquad\qquad \leq \gamma(1/M,1/m,-r,\a)I \qquad\qquad (\mbox{by Lemma~\ref{lem-1} and $-r\in [-1,0)$}),
\end{align*}
where $X=A_1\oplus \cdots \oplus A_n$ and $\frac{1}{M}\leq X^{-1}\leq \frac{1}{m}$, and $\Psi_{\Phi,\omega}$ is defined by \eqref{eq:uplm}.
\end{proof}


 If we put $\a =1$ and choose $\a$ such that $\gamma(m,M,-r,\a)=0$ in Theorem~\ref{thm-IMAH}, then we have the following difference type and ratio type inequalities of \eqref{eq:imah}:

\begin{corollary} \label{cor-dr}
Under the same assumption of Theorem~\ref{thm-IMAH}, if $r\in (0,1)$, then
\begin{equation} \label{eq:d1}
\mathfrak{M}_{\sigma}(\Phi(A_1^r),\ldots,\Phi(A_n^r)) - \Phi(\mathfrak{M}_{\sigma}(A_1,\ldots,A_n)^r) \leq \gamma(1/M,1/m,-r)I
\end{equation}
and
\begin{align} \label{eq:Km}
\frac{4Mm}{(M+m)^2} K(h,-r)^{-1}& \Phi(\mathfrak{M}_{\sigma}(A_1,\ldots,A_n)^r)\leq \mathfrak{M}_{\sigma}(\Phi(A_1^r),\ldots,\Phi(A_n^r)) \\
& \leq K(h,-r) \Phi(\mathfrak{M}_{\sigma}(A_1,\ldots,A_n)^r),\notag
\end{align}
where the generalized Kantorovich constant $K(h,r)$ is defined by \eqref{eq:K} and $h=~M/m$.
\end{corollary}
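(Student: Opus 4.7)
The plan is to extract all three bounds from Theorem~\ref{thm-IMAH} together with a few auxiliary facts. The difference estimate \eqref{eq:d1} is the $\alpha=1$ specialization of \eqref{eq:imah}, under the convention $\gamma(m,M,r):=\gamma(m,M,r,1)$ introduced before Lemma~\ref{lem-p1}. For the upper bound in \eqref{eq:Km}, I would choose $\alpha$ in \eqref{eq:imah} so that $\gamma(1/M,1/m,-r,\alpha)=0$; reading off the formula in Lemma~\ref{lem-1}, the critical value is the generalized Kantorovich constant $K(h,-r)$ (the condition ratio for $X^{-1}$ being $(1/m)/(1/M)=M/m=h$), and substituting back into \eqref{eq:imah} gives the claim.

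The lower bound in \eqref{eq:Km} does not follow from Theorem~\ref{thm-IMAH} directly and needs a separate chain of estimates. Writing $Y=\mathfrak{M}_\sigma(A_1,\ldots,A_n)$ and using $\mathfrak{M}_\sigma\geq\mathcal{H}_\omega$ (inherited from $\mathcal{H}_\omega\leq\mathfrak{M}\leq\mathcal{A}_\omega$ via \eqref{eq:hma}), the desired inequality reduces, after inversion, to
\[
\sum_{j=1}^n\omega_j\,\Phi(A_j^r)^{-1}\ \leq\ \frac{(M+m)^2}{4Mm}\,K(h,-r)\,\Phi(Y^r)^{-1}.
\]
I would establish this in four links: (a) operator convexity of $t\mapsto t^{-1}$ for the unital positive map $\Phi$ gives $\Phi(A_j^r)^{-1}\leq\Phi(A_j^{-r})$; (b) Lemma~\ref{lem-p1} with exponent $-r\in\mathbb{R}\setminus[0,1]$ applied to the unital positive functional $\Psi_\omega$ yields $\sum_j\omega_j A_j^{-r}\leq K(h,-r)(\sum_j\omega_j A_j)^{-r}$; (c) $Y\leq\mathcal{A}_\omega$ together with the fact that $t\mapsto t^{-r}$ is operator monotone decreasing for $r\in(0,1)$ replaces $(\sum_j\omega_j A_j)^{-r}$ by $Y^{-r}$ at no cost; (d) the Kantorovich case ($p=-1$) of Lemma~\ref{lem-p1} applied to $\Phi$ with the single operator $Y^r$ (whose spectrum lies in $[m^r,M^r]$) yields $\Phi(Y^{-r})\leq K(h^r,-1)\,\Phi(Y^r)^{-1}$. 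Concatenating (a)--(d) and then inverting produces the desired lower bound.

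The main obstacle is the bookkeeping of constants in step~(d): the Kantorovich factor that appears naturally is $K(h^r,-1)=(M^r+m^r)^2/(4M^r m^r)$, attached to $Y^r$ whose condition ratio is $h^r$, rather than the target $K(h,-1)=(M+m)^2/(4Mm)$. I would absorb this mismatch via the elementary observation that $s\mapsto(s+1)^2/(4s)$ is increasing on $[1,\infty)$ together with $h^r\leq h$ for $r\in(0,1)$, which gives $K(h^r,-1)\leq K(h,-1)$. The remaining links are routine, provided the order-reversals produced by $t\mapsto t^{-1}$ and by $t\mapsto t^{-r}$ are tracked carefully.
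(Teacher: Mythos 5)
Your proposal is correct and follows essentially the same route as the paper's proof: \eqref{eq:d1} and the upper bound in \eqref{eq:Km} come from Theorem~\ref{thm-IMAH} with $\a=1$ and with $\a$ chosen so that $\gamma(1/M,1/m,-r,\a)=0$, and the lower bound comes from the same chain $\mathfrak{M}_{\sigma}\geq \mathcal{H}_{\omega}$, Choi's inequality, the bound \eqref{pn5}, and the reverse Choi inequality \eqref{eq:RC}, with your step (c) merely moving the comparison $\mathfrak{M}_{\sigma}\leq \mathcal{A}_{\omega}$ earlier in the chain. In fact your step (d) is more careful than the paper, which cites \eqref{eq:RC} with the constant $(M+m)^2/(4Mm)$ even though the operator involved has condition ratio $h^r$; your observation that $K(h^r,-1)\leq K(h,-1)$ for $r\in(0,1)$ is precisely what justifies that citation.
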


\begin{proof}
If we put $\a =1$ in \eqref{eq:imah}, then we have \eqref{eq:d1}. If we choose $\a$ such that \\
$\gamma(1/M,1/m,-r,\a)=0$ in \eqref{eq:imah}, then $\a = K(h,-r)$ since $\frac{1/m}{1/M}=M/m=h$ and we obtain the second inequality of \eqref{eq:Km}.

To get the first inequality of \eqref{eq:Km}, note that for every positive invertible operator $A$ with $mI\leq A\leq MI$, we have the reverse Choi's inequality \cite[Theorem 1.32]{FMPS}
\begin{equation} \label{eq:RC}
\Phi(A^{-1})\leq \frac{(M+m)^2}{4Mm}\Phi(A)^{-1}.
\end{equation}
 Furthermore, since $-r\in(-1,0)$, Lemma \ref{lem-p1} for a unital positive linear mapping $\Psi_\omega$ defined by \eqref{eq:uplm} yields that
\begin{align}\label{pn5}
\sum_{j=1}^n \omega_j A_j^{-r} \leq K(h,-r) \left( \sum_{j=1}^n \omega_j A_j\right)^{-r}.
\end{align}
It follows that
{\small\begin{align*}
\mathfrak{M}_{\sigma}(\Phi(A_1^r),\ldots,\Phi(A_n^r)) & \geq \left( \sum_{j=1}^n \omega_j \Phi(A_j^r)^{-1}\right)^{-1}\qquad\qquad\qquad\qquad(\mbox{by $\mathfrak{M}_\sigma\geq \mathcal{H}_\omega$}) \\
& \geq \left( \sum_{j=1}^n \omega_j \Phi(A_j^{-r})\right)^{-1} = \left( \Phi\left(\sum_{j=1}^n \omega_j A_j^{-r}\right)\right)^{-1}\qquad(\mbox{by \eqref{pnn}}) \\
& \geq K(h,-r)^{-1}\Phi\left( \left(\sum_{j=1}^n \omega_j A_j\right)^{-r}\right)^{-1}\qquad\qquad\qquad(\mbox{by \eqref{pn5}}) \\
& \geq K(h,-r)^{-1} \frac{4Mm}{(M+m)^2} \Phi\left( \left(\sum_{j=1}^n \omega_j A_j\right)^{r}\right)\qquad(\mbox{by \eqref{eq:RC}})\\
& \geq \frac{4Mm}{(M+m)^2} K(h,-r)^{-1}\Phi(\mathfrak{M}_{\sigma}(A_1,\ldots,A_n)^r)
\end{align*}}
and the last inequality comes from $\mathcal{A}_\omega\geq \mathfrak{M}_\sigma$ and so $\mathcal{A}_\omega^r \geq \mathfrak{M}_\sigma^r$ for $r\in (0,1)$.
\end{proof}
The L\"{o}wner--Heinz theorem states that if $A\geq B\geq 0$, then $A^p\geq B^p$ for all $p\in [0,1]$. However, $A\geq B$ does not imply    $A^p\geq B^p$ for $p>1$ in general. Related to the Kantorovich inequality, Furuta \cite{F1998} showed the following order preserving operator inequality: Let $A$ and $B$ be positive operators with $A\geq B\geq 0$ and $M_1I\geq A\geq m_1I$ or $M_2I\geq B\geq m_2I$ for some scalars $0<m_1\leq M_1$ and $0<m_2\leq M_2$, and put $h_1=M_1/m_1$ and $h_2=M_2/m_2$. Then
\begin{equation} \label{eq:f1}
B^p \leq K(h_1,p) A^p \qquad \mbox{for all $p\geq 1$}
\end{equation}
and
\begin{equation} \label{eq:f2}
B^p \leq K(h_2,p) A^p \qquad \mbox{for all $p\geq 1$.}
\end{equation}

In the next theorem, we present the Ando--Hiai type inequality for deformed means, when $r\geq 1$.
\begin{theorem}\label{k1}
 Let $A_1,\ldots,A_n \in {\Bbb P}$ such that $mI\leq A_j \leq MI$ \ $(j=1,\ldots,n)$ for some scalars $0<m<M$ and let $\Phi$ be a unital positive linear map. Assume that $\mathfrak{M}:{\Bbb P}^n \mapsto {\Bbb P}$ is an $n$-variable operator mean such that $\mathcal{H}_{\omega} \leq \mathfrak{M} \leq \mathcal{A}_{\omega}$ for some probability vector $\omega=(\omega_1,\ldots,\omega_n)$, and $\sigma$ is an operator mean. Then for each $\a >0$
\begin{equation} \label{eq:abr1}
\mathfrak{M}_\sigma(\Phi(A_1^r),\dots,\Phi(A_k^r))\leq \a \Phi\left(\mathfrak{M}_\sigma(A_1,\ldots,A_k)^r\right) + \gamma\left(\frac{1}{M},\frac{1}{m},-r,\a K(h,r)^{-1} \right)I
\end{equation}
for all $r\geq 1$, where $\gamma(m,M,r,\a)$ is defined by \eqref{eq:rmMa}.
\end{theorem}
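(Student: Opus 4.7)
The plan is to adapt the proof of Theorem~\ref{thm-IMAH} from the range $r\in(0,1]$ to $r\geq 1$. The only step of the earlier proof that fails for $r>1$ is the operator monotonicity of $t\mapsto t^r$ (used to deduce $\mathcal{H}_{\omega}^r\leq \mathfrak{M}_{\sigma}^r$ from $\mathcal{H}_{\omega}\leq \mathfrak{M}_{\sigma}$). Furuta's Kantorovich-type inequality \eqref{eq:f1} supplies the appropriate substitute: it produces an extra factor $K(h,r)$, which is precisely the constant that migrates into the argument of $\gamma$ in \eqref{eq:abr1}.

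First I record the bounds. By \eqref{eq:hma}, $\mathcal{H}_{\omega}\leq \mathfrak{M}_{\sigma}\leq \mathcal{A}_{\omega}$, and $mI\leq A_j\leq MI$ propagates to $mI\leq \mathfrak{M}_{\sigma}\leq MI$, so the spectral bound ratio of $\mathfrak{M}_{\sigma}$ is $h=M/m$. Applying \eqref{eq:f1} to the operator order $\mathfrak{M}_{\sigma}\geq \mathcal{H}_{\omega}$ gives, for every $r\geq 1$,
\[
\mathcal{H}_{\omega}^r \;\leq\; K(h,r)\,\mathfrak{M}_{\sigma}^r,
\]
equivalently $\mathfrak{M}_{\sigma}^r \geq K(h,r)^{-1}\mathcal{H}_{\omega}^r$. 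Applying $\Phi$ and using $\mathcal{H}_{\omega}=(\sum_j\omega_j A_j^{-1})^{-1}$ together with the bound $\mathfrak{M}_{\sigma}\leq \mathcal{A}_{\omega}$ on the other side then mirrors the reduction of the proof of Theorem~\ref{thm-IMAH}, yielding
\[
\mathfrak{M}_{\sigma}(\Phi(A_1^r),\dots,\Phi(A_n^r))-\alpha\Phi(\mathfrak{M}_{\sigma}^r) \;\leq\; \Psi_{\Phi,\omega}(Z^{-r}) - \alpha K(h,r)^{-1}\,\Psi_{\Phi,\omega}(Z)^{-r},
\]
where $Z=X^{-1}=A_1^{-1}\oplus\cdots\oplus A_n^{-1}$ has spectrum in $[1/M,1/m]$, so $\Psi_{\Phi,\omega}(Z^{-r})=\sum_j\omega_j\Phi(A_j^r)$ and $\Psi_{\Phi,\omega}(Z)=\sum_j\omega_j\Phi(A_j^{-1})$. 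Since $-r\in\mathbb{R}\setminus[0,1]$, Lemma~\ref{lem-1} applied to the unital positive linear map $\Psi_{\Phi,\omega}$ with coefficient $\alpha K(h,r)^{-1}$ and bounds $1/M,1/m$ produces the scalar bound $\gamma(1/M,1/m,-r,\alpha K(h,r)^{-1}) I$, which is exactly \eqref{eq:abr1}.

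The main obstacle is the intermediate passage from $\Phi\bigl((\sum_j\omega_j A_j^{-1})^{-r}\bigr)$ to $\bigl(\sum_j\omega_j\Phi(A_j^{-1})\bigr)^{-r}$ in the chain above: in Theorem~\ref{thm-IMAH} this step was immediate from operator convexity of $t\mapsto t^{-r}$ on $(0,\infty)$ for $-r\in[-1,0)$, but operator convexity of $t\mapsto t^{-r}$ fails once $r>1$. The remedy is to combine Choi's inequality $\Phi(Y^{-1})\geq\Phi(Y)^{-1}$ with a carefully arranged second use of Furuta's inequality \eqref{eq:f1}; the bookkeeping has to be set up so that only one Kantorovich factor surfaces, matching the coefficient $\alpha K(h,r)^{-1}$ appearing in \eqref{eq:abr1} rather than producing a duplicated constant $K(h,r)^{-2}$.
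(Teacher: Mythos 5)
Your opening move (Furuta's inequality \eqref{eq:f1} applied to $\mathcal{H}_\omega \leq \mathfrak{M}_\sigma$ together with $mI\leq \mathfrak{M}_\sigma\leq MI$, giving $\mathcal{H}_\omega^r \leq K(h,r)\,\mathfrak{M}_\sigma^r$) is exactly the paper's first step, and you correctly locate the origin of the factor $K(h,r)^{-1}$ inside $\gamma$. But the proof is not complete: your central display,
\[
\mathfrak{M}_{\sigma}(\Phi(A_1^r),\dots,\Phi(A_n^r))-\alpha\Phi(\mathfrak{M}_{\sigma}^r) \;\leq\; \Psi_{\Phi,\omega}(Z^{-r}) - \alpha K(h,r)^{-1}\,\Psi_{\Phi,\omega}(Z)^{-r},
\]
is precisely the step that is unjustified for $r>1$, and your last paragraph concedes this without resolving it. To obtain it you need $\Phi(\mathcal{H}_\omega^r)=\Phi\bigl((\sum_j\omega_j A_j^{-1})^{-r}\bigr)\geq \bigl(\sum_j\omega_j\Phi(A_j^{-1})\bigr)^{-r}$, the analogue of \eqref{pn3}; this is Jensen's inequality for $t\mapsto t^{-r}$, which holds only for $-r\in[-1,0)$ and fails once $r>1$. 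The remedy you sketch (Choi's inequality plus a second Kantorovich-type correction) does give a bound, but only $\Phi(\mathcal{H}_\omega^r)\geq K(h,r)^{-1}\bigl(\sum_j\omega_j\Phi(A_j^{-1})\bigr)^{-r}$, so the coefficient fed into Lemma~\ref{lem-1} becomes $\alpha K(h,r)^{-2}$, not $\alpha K(h,r)^{-1}$. Since $\gamma(1/M,1/m,-r,\cdot)$ is decreasing in its last argument and $K(h,r)\geq 1$, this proves a strictly weaker inequality than \eqref{eq:abr1}. The "careful bookkeeping" you defer to is therefore not a detail but the missing idea: any route that pushes $\Phi$ past the negative power $-r$ incurs a second Kantorovich factor.

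The paper avoids this entirely by reversing the order of operations: $\Phi$ is never applied termwise to a negative power. One first proves the $\Phi$-free operator inequality
\[
\sum_{j=1}^n \omega_j A_j^r - \alpha\,\mathfrak{M}_\sigma(A_1,\ldots,A_n)^r \;\leq\; \gamma\Bigl(\tfrac{1}{M},\tfrac{1}{m},-r,\alpha K(h,r)^{-1}\Bigr)I,
\]
by combining $\mathfrak{M}_\sigma^r \geq K(h,r)^{-1}\mathcal{H}_\omega^r$ with Lemma~\ref{lem-1} applied to the map $\Psi_\omega$ of \eqref{eq:uplm} and $X=A_1^{-1}\oplus\cdots\oplus A_n^{-1}$, for which $\Psi_\omega(X^{-r})=\sum_j\omega_j A_j^r$ and $\Psi_\omega(X)^{-r}=\mathcal{H}_\omega^r$; no $\Phi$ appears, hence no convexity issue arises. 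Only then is the unital positive linear map $\Phi$ applied to both sides: since $\mathfrak{M}_\sigma(\Phi(A_1^r),\ldots,\Phi(A_n^r))\leq \sum_j\omega_j\Phi(A_j^r)=\Phi\bigl(\sum_j\omega_j A_j^r\bigr)$ by $\mathfrak{M}_\sigma\leq\mathcal{A}_\omega$ and linearity, the conclusion follows with the stated constant. In short, Theorem~\ref{thm-IMAH} could afford to move $\Phi$ inside first only because operator convexity of $t^{-r}$ is available for $r\in(0,1]$; for $r\geq 1$ the linearity of $\mathcal{A}_\omega$ must do that job instead, and that structural rearrangement is what your proposal is missing.
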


\begin{proof}
Since $f_{\sigma}'(1)\in (0,1]$ and $\mathcal{H}_{\omega} \leq \mathfrak{M} \leq \mathcal{A}_{\omega}$, we have $\mathcal{H}_\omega\leq \mathfrak{M}_\sigma \leq\mathcal{A}_\omega$ and $m I\leq \mathfrak{M}_\sigma \leq MI,$ and so it follows from \eqref{eq:f1} that
 \begin{align*} 
\mathcal{H}_\omega^r\leq K(h,r) \mathfrak{M}_\sigma(A_1,\dots,A_k)^r
 \end{align*}
 for all $r\geq 1$. Therefore, for each $\a >0$
\begin{align*}
\sum_{i=1}^k \omega_i A_i^r - \a \mathfrak{M}_\sigma(A_1,\dots,A_k)^r & \leq \sum_{i=1}^k \omega_i A_i^r - \a K(h,r)^{-1} \mathcal{H}_\omega^r.
\end{align*}
Moreover, Lemma~\ref{lem-1} with $X=A_1^{-1}\oplus\cdots\oplus A_k^{-1}$ gives
\begin{align*}
\sum_{i=1}^k \omega_i A_i^r -\a K(h,r)^{-1} \mathcal{H}_\omega^r & = \sum_{i=1}^k \omega_i \left(A_i^{-1}\right)^{-r} - \alpha K(h,r)^{-1} \left( \sum_{i=1}^k \omega_i A_i^{-1} \right)^{-r}\\
 &=\Psi_\omega(X^{-r})-\alpha K(h,r)^{-1} \Psi_\omega(X)^{-r}\\
 &\leq \gamma\left(\frac{1}{M},\frac{1}{m},-r,\alpha K(h,r)^{-1} \right)I,
\end{align*}
where $\Psi_{\omega}$ is defined by \eqref{eq:uplm}. Combining these two inequalities yields
\[
\sum_{i=1}^k \omega_i A_i^r - \alpha \mathfrak{M}_\sigma(A_1,\dots,A_k)^r\leq \gamma\left(\frac{1}{M},\frac{1}{m},-r,\alpha K(h,r)^{-1} \right)I.
\]
Hence, for each $\a >0$
{\small\begin{align*}
\mathfrak{M}_\sigma(\Phi(A_1^r),\dots,\Phi(A_k^r)) - \a \Phi\left(\mathfrak{M}_{\sigma}(A_1,\dots,A_k)^r\right)
& \leq \sum_{j=1}^n \omega_j \Phi(A_j^r) - \a \Phi\left(\mathfrak{M}_{\sigma}(A_1,\dots,A_k)^r\right)\\
& = \Phi\left( \sum_{j=1}^n \omega_j A_j^r - \a \mathfrak{M}_{\sigma}(A_1,\dots,A_k)^r \right)\\
& \leq \gamma\left(\frac{1}{M},\frac{1}{m},-r,\a K(h,r)^{-1} \right)I\,.
\end{align*}}
\end{proof}

\begin{corollary}\label{co:36}
Under the same assumption of Theorem~\ref{k1}, if $r\geq 1$, then
 \begin{equation} \label{eq:k1-1}
 \mathfrak{M}_{\sigma}(\Phi(A_1^r),\dots,\Phi(A_k^r))-\Phi\left(\mathfrak{M}_{\sigma}(A_1,\dots,A_k)^r\right) \leq \gamma\left(\frac{1}{M},\frac{1}{m},-r,K(h,r)^{-1} \right)I
 \end{equation}
 and
\begin{align} \label{eq:k1-2}
& \left(\frac{(M^r+m^r)^2}{4M^rm^r}\right)^{-1} K(h,-r)^{-1}K(h,r)^{-1} \Phi(\mathfrak{M}_{\sigma}(A_1,\dots,A_k)^r) \notag \\
&\qquad \leq \mathfrak{M}_{\sigma}(\Phi(A_1^r),\dots,\Phi(A_k^r))\leq K(h,-r)K(h,r)\ \Phi(\mathfrak{M}_{\sigma}(A_1,\dots,A_k)^r).
 \end{align}
\end{corollary}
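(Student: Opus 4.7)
The plan is to read off both parts of the corollary directly from Theorem~\ref{k1} for \eqref{eq:k1-1} and for the upper half of \eqref{eq:k1-2}, and to derive the lower bound of \eqref{eq:k1-2} by an independent chain of estimates paralleling Corollary~\ref{cor-dr}. Substituting $\a=1$ into \eqref{eq:abr1} immediately yields \eqref{eq:k1-1}. For the upper bound in \eqref{eq:k1-2}, I would use that $\gamma(m',M',r',\alpha')=0$ precisely when $\alpha'=K(M'/m',r')$ by Lemma~\ref{lem-p1}. With $m'=1/M$, $M'=1/m$, $r'=-r$ one has $M'/m'=h$, so the choice $\a K(h,r)^{-1}=K(h,-r)$, i.e.\ $\a=K(h,r)K(h,-r)$, kills the additive term in \eqref{eq:abr1} and leaves the asserted upper bound.

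For the lower bound of \eqref{eq:k1-2}, I would follow the route used in Corollary~\ref{cor-dr}. Starting from $\mathcal{H}_{\omega}\leq \mathfrak{M}_{\sigma}$ and the operator convexity of $t\mapsto t^{-1}$, I would write
\[
\mathfrak{M}_{\sigma}(\Phi(A_1^r),\ldots,\Phi(A_n^r))\geq \Bigl(\sum_{j=1}^n \omega_j \Phi(A_j^r)^{-1}\Bigr)^{\!-1}\geq \Phi\Bigl(\sum_{j=1}^n \omega_j A_j^{-r}\Bigr)^{\!-1}.
\]
Since $-r\leq -1$ lies in $\mathbb{R}\setminus[0,1]$, Lemma~\ref{lem-p1} applied to the unital positive linear map $\Psi_{\omega}$ of \eqref{eq:uplm} gives $\sum_j \omega_j A_j^{-r}\leq K(h,-r)\bigl(\sum_j \omega_j A_j\bigr)^{-r}$; applying $\Phi$ and inverting introduces a factor $K(h,-r)^{-1}$. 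Next, the reverse Choi inequality~\eqref{eq:RC}, applied to $\bigl(\sum_j \omega_j A_j\bigr)^{r}\in[m^r I, M^r I]$, converts $\Phi\bigl(\bigl(\sum_j\omega_j A_j\bigr)^{-r}\bigr)^{-1}$ into a constant multiple of $\Phi\bigl(\bigl(\sum_j\omega_j A_j\bigr)^{r}\bigr)$ with the factor $\frac{4M^rm^r}{(M^r+m^r)^2}$.

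The main obstacle, and the substantive difference from Corollary~\ref{cor-dr}, is the final step: for $r>1$ the map $t\mapsto t^r$ fails to be operator monotone, so the bound $\mathfrak{M}_{\sigma}(A_1,\ldots,A_n)\leq \mathcal{A}_{\omega}$ cannot be raised to the $r$-th power directly. The remedy is Furuta's order-preserving inequality~\eqref{eq:f1}, applied to the pair $\mathcal{A}_{\omega}\geq \mathfrak{M}_{\sigma}(A_1,\ldots,A_n)$ with $mI\leq \mathfrak{M}_{\sigma}\leq MI$, which supplies $\mathfrak{M}_{\sigma}(A_1,\ldots,A_n)^r\leq K(h,r)\mathcal{A}_{\omega}^r$. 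This contributes the last $K(h,r)^{-1}$ factor, and combining it with the two constants accumulated above produces the claimed lower bound.
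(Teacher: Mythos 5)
Your proposal is correct and follows essentially the same route as the paper: putting $\a=1$ and choosing $\a=K(h,r)K(h,-r)$ in Theorem~\ref{k1} give \eqref{eq:k1-1} and the upper half of \eqref{eq:k1-2}, while the lower half is obtained from the chain $\mathcal{H}_\omega\leq\mathfrak{M}_\sigma$, Choi's inequality, the reverse Choi inequality \eqref{eq:RC}, Lemma~\ref{lem-p1}, and Furuta's inequality \eqref{eq:f1} to overcome the failure of operator monotonicity of $t\mapsto t^r$ for $r\geq 1$. The only (immaterial) difference is the order in which you apply Lemma~\ref{lem-p1} and the reverse Choi inequality in the middle of the chain, which yields the same accumulated constants.
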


\begin{proof}
If we put $\a =1$ in \eqref{eq:abr1} of Theorem~\ref{k1}, then we have \eqref{eq:k1-1}. Moreover, from the discussion before Lemma \ref{lem-p1}, we know that if $\gamma\left(\frac{1}{M},\frac{1}{m},-r,\a K(h,r)^{-1} \right)=0$, then  $\a K(h,r)^{-1}=K(h,-r)$ and so $\a = K(h,r) K(h,-r)$. Hence, we obtain the second inequality of \eqref{eq:k1-2} by putting $\gamma\left(\frac{1}{M},\frac{1}{m},-r,\a K(h,r)^{-1} \right)=0$ in \eqref{eq:abr1} of Theorem~\ref{k1}. \par
Next, we prove the first inequality of \eqref{eq:k1-2}. It follows from Lemma~\ref{lem-p1} with $X=A_1\oplus \cdots \oplus A_n$ and $mI\leq X\leq MI$ that
\[
\sum_{j=1}^n \omega_j A_j^{-r}=\Psi_{\omega}(X^{-r})\leq K(h,-r)\Psi_{\omega}(X)^{-r}=K(h,-r)\left(\sum_{j=1}^n \omega_j A_j\right)^{-r}
\]
for $-r\leq -1$, and so
\begin{equation*} 
\left( \sum_{j=1}^n \omega_j A_j^{-r}\right)^{-1} \geq K(h,-r)^{-1}\left( \sum_{j=1}^n \omega_j A_j\right)^r.
\end{equation*}
Applying the  reverse Choi's inequality \eqref{eq:RC} with $X=A_1^{-r}\oplus \cdots \oplus A_n^{-r}$ and noting that $M^{-r}I\leq X\leq m^{-r}I$, we get
\[
\Psi_{\omega}(X^{-1})\leq \frac{(M^{-r}+m^{-r})^2}{4M^{-r}m^{-r}}\Psi_{\omega}(X)^{-1} = \frac{(M^{r}+m^{r})^2}{4M^{r}m^{r}}\Psi_{\omega}(X)^{-1},
\]
where $\Psi_{\omega}$ is defined by \eqref{eq:uplm} and so
\begin{equation} \label{eq:2}
\left( \Phi\left(\sum_{j=1}^n \omega_j A_j^{-r}\right)\right)^{-1} \geq \left( \frac{(M^{r}+m^{r})^2}{4M^{r}m^{r}}\right)^{-1} \Phi\left(\left(\sum_{j=1}^n \omega_j A_j^{-r}\right)^{-1}\right).
\end{equation}
On the other hand, by Choi's inequality, we can write
\begin{align}\label{eq:2112}
\left(\Phi\left(\sum_{j=1}^n \omega_j A_j^{-r}\right)\right)^{-1}= \left( \sum_{j=1}^n \omega_j \Phi(A_j^{-r})\right)^{-1} \leq \left( \sum_{j=1}^n \omega_j \Phi(A_j^r)^{-1}\right)^{-1}.
\end{align}

Hence, we have
{\small\begin{align*}
\mathfrak{M}_{\sigma}(\Phi(A_1^r),\ldots,\Phi(A_n^r)) & \geq \left( \sum_{j=1}^n \omega_j \Phi(A_j^r)^{-1}\right)^{-1} \\
& = \left(\Phi\left(\sum_{j=1}^n \omega_j A_j^{-r}\right)\right)^{-1}\qquad(\mbox{by \eqref{eq:2112}}) \\
& \geq \left( \frac{(M^{r}+m^{r})^2}{4M^{r}m^{r}}\right)^{-1} \Phi\left(\left(\sum_{j=1}^n \omega_j A_j^{-r}\right)^{-1}\right)\qquad(\mbox{by \eqref{eq:2}})\\
& \geq \left( \frac{(M^{r}+m^{r})^2}{4M^{r}m^{r}}\right)^{-1} K(h,-r)^{-1} \Phi\left(\left(\sum_{j=1}^n \omega_j A_j\right)^r \right)\quad(\mbox{by Lemma \ref{lem-p1}}) \\
& \geq \left( \frac{(M^{r}+m^{r})^2}{4M^{r}m^{r}}\right)^{-1} K(h,-r)^{-1} K(h,r)^{-1}\Phi(\mathfrak{M}_{\sigma}(A_1,\ldots,A_n)^r),
\end{align*}}
whence we derive the first inequality of \eqref{eq:k1-2}. Note that the last inequality comes from $\mathfrak{M}_{\sigma}\leq \mathcal{A}_\omega$ and so $\mathfrak{M}_{\sigma}^r\leq K(h,r) \mathcal{A}_\omega^r$ for all $r\geq1$ by \eqref{eq:f1}.
\end{proof}


It is known that $\mathfrak{M}_{\sigma}(A_1^r,\ldots,A_n^r)$ and $\mathfrak{M}_{\sigma}(A_1,\ldots,A_n)^r$ have no relation under the operator order for $r\in {\Bbb R}$. As an application of our results, we have the following corollary.


\begin{corollary} \label{cor-ahr}
 Let $A_1,\ldots,A_n \in {\Bbb P}$ such that $mI\leq A_j \leq MI$ \ $(j=1,\ldots,n)$ for some scalars $0<m<M$. If
 $\mathfrak{M}:{\Bbb P}^n \mapsto {\Bbb P}$ is an $n$-variable operator mean such that $\mathcal{H}_{\omega} \leq \mathfrak{M}_{\sigma} \leq \mathcal{A}_{\omega}$ for some probability vector $\omega=(\omega_1,\ldots,\omega_n)$, then
{\small \begin{align} \label{eq:khr}
K(h,-r)^{-1}\mathfrak{M}_{\sigma}(A_1,\ldots,A_n)^r & \leq \mathfrak{M}_{\sigma}(A_1^r,\ldots,A_n^r) \\
& \leq K(h,-r) \mathfrak{M}_{\sigma}(A_1,\ldots,A_n)^r,\qquad (r\in (0,1)) \notag
\end{align} }
and
 \begin{align}\label{kkk}
 K(h,-r)^{-1}K(h,r)^{-1}\mathfrak{M}_{\sigma}(A_1,\dots,A_k)^r & \leq \mathfrak{M}_{\sigma}(A_1^r,\dots,A_k^r)\\
& \leq K(h,-r)K(h,r)\mathfrak{M}_{\sigma}(A_1,\dots,A_k)^r\qquad (r\geq 1) \notag
 \end{align}
 for every operator mean $\sigma$. In particular
 \[
\mathfrak{M}_{\sigma}(A_1,\ldots,A_n)\leq I \qquad \mbox{implies} \qquad \mathfrak{M}_{\sigma}(A_1^r,\ldots,A_n^r) \leq K(h,-r)I
 \quad (r\in(0,1))
 \]
 and
 $$\mathfrak{M}_{\sigma}(A_1,\dots,A_k)\leq I\quad\mbox{implies }\qquad \mathfrak{M}_{\sigma}(A_1^r,\dots,A_k^r)\leq K(h,-r)K(h,r)I\quad (r\geq 1).$$
\end{corollary}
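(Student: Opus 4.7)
The plan is to bound $\mathfrak{M}_\sigma(A_1^r,\ldots,A_n^r)$ from above and below by sandwiching it between the harmonic and arithmetic means of the powers $A_j^r$, and then to compare those two with $\mathfrak{M}_\sigma(A_1,\ldots,A_n)^r$ via the Kantorovich-type Jensen estimates of Lemma~\ref{lem-p1}. A useful observation up front is that $mI\leq A_j\leq MI$ together with $\mathcal{H}_\omega\leq \mathfrak{M}_\sigma\leq \mathcal{A}_\omega$ forces $mI\leq \mathfrak{M}_\sigma(A_1,\ldots,A_n)\leq MI$, so the same Kantorovich ratio $h=M/m$ governs $A_j$, $A_j^{-1}$, $\mathcal{A}_\omega$, $\mathcal{H}_\omega$ and $\mathfrak{M}_\sigma$ simultaneously; this keeps the bookkeeping clean and explains why only one $h$ appears in the final constants.

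I would first treat the case $r\in(0,1)$. For the lower bound in \eqref{eq:khr}, start from $\mathfrak{M}_\sigma(A_1^r,\ldots,A_n^r)\geq \mathcal{H}_\omega(A_1^r,\ldots,A_n^r)=(\sum_j\omega_j A_j^{-r})^{-1}$, apply Lemma~\ref{lem-p1} to $\Psi_\omega$ with $X=A_1\oplus\cdots\oplus A_n$ at the exponent $-r\in(-1,0)$ to obtain $\sum_j\omega_j A_j^{-r}\leq K(h,-r)\mathcal{A}_\omega^{-r}$, invert this, and then use $\mathcal{A}_\omega\geq\mathfrak{M}_\sigma$ together with operator monotonicity of $t\mapsto t^r$ on $(0,1]$ to replace $\mathcal{A}_\omega^r$ by $\mathfrak{M}_\sigma(A_1,\ldots,A_n)^r$. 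The upper bound is dual: use $\mathfrak{M}_\sigma(A_1^r,\ldots,A_n^r)\leq \mathcal{A}_\omega(A_1^r,\ldots,A_n^r)=\sum_j\omega_j A_j^r$, rewrite the right-hand side as $\Psi_\omega(Y^{-r})$ with $Y=A_1^{-1}\oplus\cdots\oplus A_n^{-1}$ (which still has ratio $h$), apply Lemma~\ref{lem-p1} at the same exponent $-r$ to dominate it by $K(h,-r)\mathcal{H}_\omega^r$, and close using $\mathcal{H}_\omega\leq \mathfrak{M}_\sigma$ and the same operator monotonicity.

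For $r\geq 1$, the identical sandwich applied with Lemma~\ref{lem-p1} at the exponent $-r\leq -1$ produces the intermediate estimates $K(h,-r)^{-1}\mathcal{A}_\omega^r\leq \mathfrak{M}_\sigma(A_1^r,\ldots,A_n^r)\leq K(h,-r)\mathcal{H}_\omega^r$. The new difficulty here is that $t\mapsto t^r$ is no longer operator monotone, so one cannot convert $\mathcal{A}_\omega\geq \mathfrak{M}_\sigma\geq \mathcal{H}_\omega$ into an inequality of $r$-th powers for free. To repair this I would invoke the Furuta-type inequality~\eqref{eq:f1}, applied first to $\mathcal{A}_\omega\geq \mathfrak{M}_\sigma$ (which yields $\mathfrak{M}_\sigma^r\leq K(h,r)\mathcal{A}_\omega^r$) and then to $\mathfrak{M}_\sigma\geq \mathcal{H}_\omega$ (which yields $\mathcal{H}_\omega^r\leq K(h,r)\mathfrak{M}_\sigma^r$). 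Inserting these two bounds furnishes exactly the missing $K(h,r)^{\pm 1}$ factors in~\eqref{kkk}. This is the step I expect to be the main obstacle: one must track carefully the direction in which \eqref{eq:f1} is inserted on each side so that the constants combine multiplicatively with the correct signs, and the uniformity of the ratio $h$ across $\mathcal{A}_\omega$, $\mathfrak{M}_\sigma$ and $\mathcal{H}_\omega$ is what makes both applications of \eqref{eq:f1} legitimate with the same $K(h,r)$.

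Finally, the two ``in particular'' implications are immediate from the upper bounds just established: if $\mathfrak{M}_\sigma(A_1,\ldots,A_n)\leq I$, then $\mathfrak{M}_\sigma(A_1,\ldots,A_n)^r\leq I$ for every $r>0$, and substituting into the upper bounds of \eqref{eq:khr} and \eqref{kkk} yields the asserted inequalities $\mathfrak{M}_\sigma(A_1^r,\ldots,A_n^r)\leq K(h,-r)I$ for $r\in(0,1)$ and $\mathfrak{M}_\sigma(A_1^r,\ldots,A_n^r)\leq K(h,-r)K(h,r)I$ for $r\geq 1$.
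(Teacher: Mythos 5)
Your proposal is correct and follows essentially the same route as the paper: the paper obtains the upper bounds by setting $\Phi$ equal to the identity map in \eqref{eq:Km} and \eqref{eq:k1-2} (whose proofs are exactly your sandwich argument with Lemma~\ref{lem-p1} and the Furuta inequality \eqref{eq:f1}), and proves the lower bounds by the very chains you describe, namely $\mathfrak{M}_{\sigma}(A_1^r,\ldots,A_n^r)\geq \bigl(\sum_j \omega_j A_j^{-r}\bigr)^{-1}\geq K(h,-r)^{-1}\mathcal{A}_{\omega}^r$ followed by L\"{o}wner--Heinz for $r\in(0,1)$ and by \eqref{eq:f1} for $r\geq 1$. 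The only cosmetic difference is that you unroll the cited corollaries into direct arguments.
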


\begin{proof}
If we put the identity map $\Phi$ in \eqref{eq:Km}, then we have the right inequality in \eqref{eq:khr}. The right Inequality in \eqref{kkk}  follows similarly from Corollary \ref{co:36}.  For the first inequality in \eqref{eq:khr}, note that $\mathcal{H}_{\omega} \leq \mathfrak{M}_{\sigma}\leq \mathcal{A}_{\omega}$ and $\mathfrak{M}_{\sigma}^r\leq \mathcal{A}_{\omega}^r$ for all $r\in(0,1)$. Then
\begin{align*}
\mathfrak{M}_{\sigma}(A_1^r,\ldots,A_n^r) & \geq \left(\sum_{j=1}^n \omega_j A_j^{-r}\right)^{-1}\\
& \geq K(h,-r)^{-1} \left( \sum_{j=1}^n \omega_j A_j\right)^r \qquad \mbox{(by \eqref{pn5}) }\\
& \geq K(h,-r)^{-1}\mathfrak{M}_{\sigma}(A_1,\ldots,A_n)^r.
\end{align*}

 To prove the first inequality in \eqref{kkk}, we have
\begin{align*}
\mathfrak{M}_{\sigma}(A_1^r,\ldots,A_n^r) & \geq \left(\sum_{j=1}^n \omega_j A_j^{-r}\right)^{-1}\\
& \geq K(h,-r)^{-1} \left( \sum_{j=1}^n \omega_j A_j\right)^r \qquad \mbox{(by \eqref{pn5}) }\\
& \geq K(h,-r)^{-1} K(h,r)^{-1} \mathfrak{M}_{\sigma}(A_1,\ldots,A_n)^r \qquad \mbox{(by \eqref{eq:f1})}
\end{align*}
for $r\geq 1$.
\end{proof}

For each $\a \in [-1,1]\backslash \{ 0\}$ the operator power mean $P_{\omega, \a}(A_1,\ldots,A_n)$ is defined as the unique solution to the equation
\begin{align*}
X & = \mathcal{A}_{\omega}(X\s_{\a} A_1,\ldots,X\s_{\a} A_n)\qquad\ \ \mbox{for $0<\a<1$}\\
X & = \mathcal{H}_{\omega}(X\s_{-\a} A_1,\ldots,X\s_{-\a} A_n)\qquad \mbox{for $-1<\a<0$},
\end{align*}
that is, for $0<\a <1$,
\[
P_{\omega, \a}=(\mathcal{A}_{\omega})_{\s_{\a}} \qquad \mbox{and} \qquad P_{\omega, -\a}=(\mathcal{H}_{\omega})_{\s_{\a}}=(P_{\omega, \a})^*.
\]

We can show a reverse Ando--Hiai inequality for the operator power mean $P_{\omega,\a}$ in the traditional way:
\begin{equation} \label{eq:ahk}
P_{\omega, \a}(A_1^r,\ldots,A_n^r) \leq \left( \frac{(M+m)^2}{4Mm}\right)^r P_{\omega, \a}(A_1,\ldots,A_n)^r \qquad \mbox{for $0<r<1$.}
\end{equation}

In fact, for $0<r\leq 1$,
\begin{align*}
P_{\omega,\a}(A_1^r,\ldots,A_n^r) & \leq \sum_{j=1}^n \omega_j A_j^r \leq \left( \sum_{j=1}^n \omega_j A_j \right)^r \\
& \leq \left(\frac{(M+m)^2}{4Mm} \left( \sum_{j=1}^n \omega_j A_j^{-1}\right)^{-1}\right)^{r} \\
& \leq \left(\frac{(M+m)^2}{4Mm}\right)^r P_{\omega,\a}(A_1,\ldots,A_n)^r
\end{align*}
and we have \eqref{eq:ahk}.

If we put $\mathfrak{M}=\mathcal{A}_{\omega}$ and $\sigma=\s_{\a}$ in Corollary~\ref{cor-ahr}, then $\mathfrak{M}_{\sigma}=(\mathcal{A}_{\omega})_{\s_{\a}}=P_{\omega, \a}$ for $0<\a <1$ and we have
\begin{equation} \label{eq:ahk2}
P_{\omega, \a}(A_1^r,\ldots,A_n^r) \leq K(h,-r) P_{\omega, \a}(A_1,\ldots,A_n)^r.
\end{equation}
Then inequality \eqref{eq:ahk2} is an improvement of inequality \eqref{eq:ahk}. To show it, we need the following lemma.
\begin{lemma} \label{lem-kan}
Let $h\geq 1$. Then the generalized Kantorovich constant has the following property:
\[
K(h,-r)\leq K(h,-1)^r \qquad \mbox{for $r\in (0,1)$}
\]
and
\[
K(h,-r)\geq K(h,-1)^r \qquad \mbox{for $r\not\in (0,1)$}
\]
\end{lemma}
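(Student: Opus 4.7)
The plan is to pass to logarithms and reduce both inequalities to a single convexity statement. Setting $c:=\log K(h,-1)$ and
\[
\phi(r):=\log K(h,-r)-rc,
\]
the two claims become exactly $\phi(r)\leq 0$ on $(0,1)$ and $\phi(r)\geq 0$ on $\mathbb{R}\setminus(0,1)$. The boundary values are immediate: $K(h,0)=1$ gives $\phi(0)=0$, and $\phi(1)=\log K(h,-1)-c=0$.

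Once $\phi(0)=\phi(1)=0$ is in hand, I would deduce the entire sign pattern from convexity of $\phi$ on $\mathbb{R}$, which, since $r\mapsto rc$ is affine, is the same as convexity of $g(r):=\log K(h,-r)$. The deduction is the standard three-chords observation: on $[0,1]$ a convex function lies below its chord through $(0,0)$ and $(1,0)$, namely the zero function, so $\phi\leq 0$; for $r>1$, slope monotonicity forces $(\phi(r)-\phi(1))/(r-1)\geq (\phi(1)-\phi(0))/1=0$, giving $\phi(r)\geq 0$; the case $r<0$ is entirely analogous, and is in fact automatic for $r\leq -1$ since there $g(r)\geq 0$ while $rc\leq 0$.

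The real work concentrates on convexity of $g$. One route is to differentiate \eqref{eq:K} directly: after substituting $p=-r$ and regrouping so that each logarithm has positive argument on the relevant subinterval, $g(r)$ decomposes into a handful of elementary pieces such as $\log(h-h^{-r})$, $\log(r+1)$, $r\log\frac{r+1}{r}$, and $r\log\frac{1-h^{-r}}{h-h^{-r}}$, and differentiating twice yields a sum whose nonnegativity is visible term by term (for instance, the piece $-r\log\frac{r+1}{r}$ contributes $\frac{1}{r(r+1)^2}$). A more conceptual alternative first records the symmetry $K(h,p)=K(h,1-p)$, which is directly checkable from \eqref{eq:K}, and then reformulates the claim as the monotonicity of $s\mapsto \log K(h,s)/(s-1)$ on $(1,\infty)$; this latter statement follows from the chord-versus-graph interpretation of $K(h,s)$ for the convex function $t\mapsto t^s$ on $[1,h]$ combined with the monotonicity of weighted power means.

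The main obstacle I anticipate is the convexity of $g$ itself: both routes are elementary but notationally awkward. The algebraic route requires a bit of case analysis across the sign changes of $-r-1$ and $h^{-r}-h$, while the symmetry-based shortcut needs a separate verification of $K(h,p)=K(h,1-p)$. Either way, once convexity is secured, the sign conclusion in all three regimes is immediate from the elementary chord estimates recorded above.
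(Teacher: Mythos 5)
Your reduction is exactly the paper's: set $F(r)=\log K(h,-r)-r\log K(h,-1)$, note $F(0)=F(1)=0$, and read off the sign pattern from convexity of $r\mapsto\log K(h,-r)$. But the paper does not prove that convexity; it cites it from a dedicated result (Tominaga, ``Estimates of Ando--Hiai type inequalities on operator power means'', Theorem 2.3). So the one piece of real mathematical content your blind proof must supply is precisely this convexity, and that is where your proposal has a genuine gap. Your Route 1 claim that, in the decomposition
\[
\log K(h,-r)=\log(h-h^{-r})-\log(r+1)-\log(h-1)-r\log\tfrac{r+1}{r}-r\log\tfrac{1-h^{-r}}{h-h^{-r}},
\]
nonnegativity of the second derivative is ``visible term by term'' is false. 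Your sample computation is correct ($-r\log\frac{r+1}{r}$ contributes $\frac{1}{r(r+1)^2}$, and $-\log(r+1)$ contributes $\frac{1}{(r+1)^2}$), but the first piece satisfies
\[
\frac{d^2}{dr^2}\,\log\!\left(h-h^{-r}\right)=-\frac{(\log h)^2\,h^{1-r}}{\left(h-h^{-r}\right)^2}<0\qquad(h>1),
\]
so it is strictly concave; regrouping it with the last piece (giving $(1+r)\log(h-h^{-r})-r\log(1-h^{-r})$) still produces a term that is concave for large $r$. The convexity of $\log K(h,-r)$ holds only through cancellation across terms, which is exactly why the paper outsources it to a theorem rather than a computation; asserting it term by term is not a proof.

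Route 2 contains the germ of a correct, self-contained argument, but not as stated. Two repairs are needed. First, the reformulation as monotonicity of $s\mapsto\log K(h,s)/(s-1)$ on $(1,\infty)$ (via $K(h,p)=K(h,1-p)$) covers only $r>0$; the case $r\in(-1,0)$ corresponds to $s\in(0,1)$, where that statement says nothing, where the chord-versus-graph description of $K(h,s)$ switches from a maximum to a minimum (because $t\mapsto t^s$ becomes concave), and where the claim is not automatic (both $\log K(h,-r)$ and $r\log K(h,-1)$ are nonpositive there). Second, ``monotonicity of weighted power means'' is not the engine that drives the monotone-quotient claim. What does work: write, for $s\notin(0,1)$,
\[
K(h,s)=\max_{1\le t\le h}\frac{w(t)h^{s}+1-w(t)}{t^{s}},\qquad w(t)=\frac{t-1}{h-1},
\]
(with $\max$ replaced by $\min$ for $s\in(0,1)$), and observe that for each fixed $t$ the function $\psi_t(s)=\log\bigl(w(t)h^{s}+1-w(t)\bigr)-s\log t$ is convex in $s$ (log-convexity of a sum of log-convex functions) and vanishes at \emph{both} $s=0$ and $s=1$, since $w(t)h+1-w(t)=t$. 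Monotonicity of the difference quotients of each $\psi_t$ based at these two zeros, followed by taking $\max$ (or $\min$) over $t$, yields every case of the lemma, including $r\in(-1,0)$, without ever needing the full convexity of $\log K(h,\cdot)$; the symmetry $K(h,p)=K(h,1-p)$ can even be dispensed with by basing the quotients at $p=0$. Completed this way, your argument would be a genuinely different, elementary alternative to the paper's citation-based proof; as submitted, however, neither route establishes the key fact, so the proposal is not yet a proof.
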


\begin{proof}
Put $F(r)=\log K(h,-r)-r\log K(h,-1)$. Since it follows from \cite[Theorem 2.3]{To} that $\log K(h,-r)$ is convex for $r\in {\Bbb R}$, and $F(0)=F(1)=0$, we have $F(r)<0$ for $r\in (0,1)$ and $F(r)>0$ for $r\not\in (0,1)$.
\end{proof}

By Lemma~\ref{lem-kan}, we have $K(h,-r) < \left( \frac{(M+m)^2}{4Mm}\right)^r$ for $0<r<1$, and so inequality \eqref{eq:ahk2} is an improvement of inequality \eqref{eq:ahk}. \par
\bigskip

\section{Norm inequalities for deformed means}

Let $\mathfrak{M}:{\Bbb P}^n \mapsto {\Bbb P}$ be an $n$-variable operator mean satisfying (I)--(IV) introduced in Section 2. Assume that $\mathcal{H}_{\omega} \leq \mathfrak{M} \leq \mathcal{A}_{\omega}$ for some probability vector $\omega=(\omega_1,\ldots,\omega_n)$ and that $\mathfrak{M}$ satisfies \eqref{eq:AHM}. Then the Lie--Trotter formula \eqref{eq:LTF} for the deformed mean $\mathfrak{M}_{\sigma}$ holds, and $\mathfrak{M}_{\sigma}$ satisfies \eqref{eq:AHM} for a p.m.i operator mean $\sigma \not= \ell$. Hence, for every $A_1,\ldots,A_n \in {\Bbb P}$, $\NORM{\mathfrak{M}_{\sigma}(A_1^p,\ldots,A_n^p)}_{\infty}^{1/p}$ increases to $\NORM{\exp\left( \sum_{j=1}^n \omega_j \log A_j \right)}_{\infty}$ as $p\searrow 0$. On the other hand, $\mathfrak{M}_{\sigma}(A_1^p,\ldots,A_n^p)^{1/p}$ for $p>0$ is not monotone increasing under the operator order. In this section, we consider their operator order relations among $\mathfrak{M}_{\sigma}(A_1^p,\ldots,A_n^p)^{1/p}$ for $p>0$ in terms of the Specht ratio and the generalized Kanotorovich constant.\par
 For this, we recall an important constant due to Specht \cite{Specht}, which estimates the upper bound of the arithmetic mean by the geometric one for positive numbers: For $x_1,x_2,\ldots, x_n \in [m,M]$ with $0<m<M$ and $h=M/m$,
\begin{equation*} 
\frac{x_1+x_2+\cdots +x_n}{n} \leq S(h) \sqrt[n]{x_1x_2\cdots x_n},
\end{equation*}
where the Specht ratio is defined by
\begin{equation} \label{eq:sr}
S(h)=\frac{(h-1)h^{\frac{1}{h-1}}}{e\log h} \quad (h\not= 1)\qquad \mbox{and} \qquad S(1)=1.
\end{equation}
It is known in \cite[Theorem 2.56]{FMPS} that $K(h^r,s/r)\mapsto S(h^s)$ as $r\to 0$.\par

 Firstly, we show the following relation among $\mathfrak{M}_{\sigma}(A_1^p,\ldots,A_n^p)^{1/p}$ for $p>0$ under the operator order without the p.m.i condition of $\sigma$:

\begin{theorem}\label{th4-1}
Let $A_1,\ldots,A_n \in {\Bbb P}$ such that $mI\leq A_j \leq MI$ \ $(j=1,\ldots,n)$ for some scalars $0<m<M$. Assume that $\mathfrak{M}:{\Bbb P}^n \mapsto {\Bbb P}$ is an $n$-variable operator mean and satisfies $\mathcal{H}_{\omega} \leq \mathfrak{M} \leq \mathcal{A}_{\omega}$ for some probability vector $\omega=(\omega_1,\ldots,\omega_n)$ and that $\sigma$ is an operator mean.
If $1\leq q \leq p$, then
\begin{align*}
K(h^p,-\frac{q}{p})^{-1/q}\mathfrak{M}_{\sigma}(A_1^p,\ldots,A_n^p)^{1/p} & \leq \mathfrak{M}_{\sigma}(A_1^q,\ldots,A_n^q)^{1/q} \\
& \leq K(h^p,-\frac{q}{p})^{1/q}\mathfrak{M}_{\sigma}(A_1^p,\ldots,A_n^p)^{1/p}.
\end{align*}
If $0<q<1$ and $q<p$, then
\begin{align*}
K(h^q,1/q)^{-1}K(h^p,-q/p)^{-1/q} & \mathfrak{M}_{\sigma}(A_1^p,\ldots,A_n^p)^{1/p} \leq \mathfrak{M}_{\sigma}(A_1^q,\ldots,A_n^q)^{1/q} \\
& \leq K(h^q,1/q) K(h^p,-q/p)^{1/q}\mathfrak{M}_{\sigma}(A_1^p,\ldots,A_n^p)^{1/p},
\end{align*}
where the generalized Kantorovich constant $K(h,p)$ is defined by \eqref{eq:K} and $h=M/m$.
In particular, if $q\to 0$, then
\begin{align*}
S(h)^{-1} S(h^p)^{-1/p} & \mathfrak{M}_{\sigma}(A_1^p,\ldots,A_n^p)^{1/p} \leq \exp\left( \sum_{j=1}^n \omega_j \log A_j \right) \\
& \leq S(h) S(h^p)^{1/p} \mathfrak{M}_{\sigma}(A_1^p,\ldots,A_n^p)^{1/p}
\end{align*}
for all $p>0$, where the Specht ratio $S(h)$ is defined by \eqref{eq:sr}.
\end{theorem}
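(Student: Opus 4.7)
The plan is to reduce everything to Corollary~\ref{cor-ahr} applied to the $n$-tuple $(A_1^p,\ldots,A_n^p)$. This new tuple satisfies $m^pI\le A_j^p\le M^pI$, so the role of the spectral ratio $h$ is played by $h^p=M^p/m^p$. Taking the exponent $r=q/p\in(0,1]$ in \eqref{eq:khr} (the hypotheses $q\le p$ and $q>0$ ensure $q/p\in(0,1]$) yields the sandwich
\[
K(h^p,-q/p)^{-1}\mathfrak{M}_{\sigma}(A_1^p,\ldots,A_n^p)^{q/p}\le \mathfrak{M}_{\sigma}(A_1^q,\ldots,A_n^q)\le K(h^p,-q/p)\,\mathfrak{M}_{\sigma}(A_1^p,\ldots,A_n^p)^{q/p}.
\]
The only remaining step is to raise this sandwich to the power $1/q$, and the treatment splits according to whether $1/q\le 1$ or $1/q>1$.

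If $1\le q\le p$, then $1/q\in(0,1]$, so by the L\"owner--Heinz theorem the map $t\mapsto t^{1/q}$ is operator monotone. Raising the sandwich to the $1/q$-th power distributes cleanly over the scalar factors and produces the first pair of inequalities at once.

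If $0<q<1$ and $q<p$, then $1/q>1$ and one must compensate for the failure of operator monotonicity by invoking the Kantorovich--Furuta inequality \eqref{eq:f1}. The key observation is that both $\mathfrak{M}_{\sigma}(A_1^q,\ldots,A_n^q)$ and $\mathfrak{M}_{\sigma}(A_1^p,\ldots,A_n^p)^{q/p}$ lie between $m^qI$ and $M^qI$ (using $\mathcal{H}_{\omega}\le\mathfrak{M}_{\sigma}\le\mathcal{A}_{\omega}$ together with the operator monotonicity of $t\mapsto t^{q/p}$), so each side of the sandwich---even after multiplication by the positive scalar $K(h^p,-q/p)^{\pm 1}$, which does not affect the spectral ratio---has spectral ratio at most $h^q$. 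Applying \eqref{eq:f1} with exponent $1/q$ to each of the two comparisons picks up exactly one extra factor $K(h^q,1/q)$, and this delivers the second pair of inequalities in the statement.

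The limit case $q\searrow 0$ is then obtained by letting $q\to 0$ in the second pair. The Lie--Trotter formula \eqref{eq:LTF} gives $\mathfrak{M}_{\sigma}(A_1^q,\ldots,A_n^q)^{1/q}\to\exp(\sum_{j=1}^n\omega_j\log A_j)$, and the quoted limit $K(h^r,s/r)\to S(h^s)$ as $r\to 0$, specialised to $(r,s)=(q,1)$, yields $K(h^q,1/q)\to S(h)$. The main technical hurdle is the remaining limit $K(h^p,-q/p)^{1/q}\to S(h^p)^{1/p}$. Substituting $r=-q/p$ reduces it to the auxiliary identity $\lim_{r\to 0}K(H,r)^{1/r}=S(H)^{-1}$ for any fixed $H>0$, which can be verified by computing the first-order Taylor expansion of $\log K(H,r)$ at $r=0$ and matching the linear coefficient with $-\log S(H)$ through the explicit formula \eqref{eq:sr}. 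Assembling these three limits completes the proof.
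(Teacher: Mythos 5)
Your proposal is correct and follows essentially the same route as the paper's proof: apply Corollary~\ref{cor-ahr} to the tuple $(A_1^p,\ldots,A_n^p)$ with exponent $r=q/p$, raise the resulting sandwich to the power $1/q$ via L\"owner--Heinz when $q\geq 1$ and via Furuta's Kantorovich-type inequality when $0<q<1$, and then let $q\to 0$ using the Lie--Trotter formula \eqref{eq:LTF}. The only cosmetic differences are that the paper invokes \eqref{eq:f2} where you invoke \eqref{eq:f1} (both apply, since spectral bounds with ratio $h^q$ are available on either side of the comparison), and that the paper merely cites the limit $K(h^r,s/r)\to S(h^s)$ from the Mond--Pe\v{c}ari\'{c} literature where you supply a Taylor-expansion verification of the auxiliary limit $K(H,r)^{1/r}\to S(H)^{-1}$ as $r\to 0$, which is indeed valid and in fact fills in a step the paper leaves implicit.
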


\begin{proof}
We prove the right inequalities. The left ones can be derived similarly.
Suppose that $1\leq q\leq p$. Since $0<q/p\leq 1$, it follows from Corollary~\ref{cor-ahr} that
\begin{align*}
\mathfrak{M}_{\sigma}(A_1^{q/p},\ldots,A_n^{q/p})\leq K(h,-q/p)\mathfrak{M}_{\sigma}(A_1,\ldots,A_n)^{q/p}.
\end{align*}
 Replacing $A_j$ by $A_j^p$ for all $j=1,\ldots,n$, we get
\begin{align}\label{I}
\mathfrak{M}_{\sigma}(A_1^{q},\ldots,A_n^{q})\leq K(h^p,-q/p)\mathfrak{M}_{\sigma}(A_1^p,\ldots,A_n^p)^{q/p}.
\end{align}
Since $0<1/q\leq 1$, by the L\"{o}wner--Heinz inequality, we have
\[
\mathfrak{M}_{\sigma}(A_1^{q},\ldots,A_n^{q})^{1/q}\leq K(h^p,-q/p)^{1/q}\mathfrak{M}_{\sigma}(A_1^p,\ldots,A_n^p)^{1/p}
\]
as desired.\par
Suppose that $0<q<1$ and $q<p$. By the discussion above, we have
\[
\mathfrak{M}_{\sigma}(A_1^{q},\ldots,A_n^{q})\leq K(h^p,-q/p)\mathfrak{M}_{\sigma}(A_1^p,\ldots,A_n^p)^{q/p}.
\]
Since $1/q\geq 1$, it follows from \eqref{eq:f2} that
\[
\mathfrak{M}_{\sigma}(A_1^{q},\ldots,A_n^{q})^{1/q}\leq K(h^q,1/q) K(h^p,-q/p)^{1/q}\mathfrak{M}_{\sigma}(A_1^p,\ldots,A_n^p)^{1/p}.
\]
If $q\to 0$, then $K(h^q,1/q)\to S(h)$ and $K(h^p,-q/p)^{1/q}\to S(h^p)^{1/p}$, and thus we have the desired inequality by the Lie--Trotter formula \eqref{eq:LTF}.
\end{proof}


 The next theorem gives a monotonicity property for the norm of the deformed means.

\begin{theorem}\label{thpn1}
Let $A_1,\ldots,A_n \in {\Bbb P}$ such that $mI\leq A_j \leq MI$ \ $(j=1,\ldots,n)$ for some scalars $0<m<M$. Assume that $\mathfrak{M}:{\Bbb P}^n \mapsto {\Bbb P}$ is an $n$-variable operator mean and satisfies $\mathcal{H}_{\omega} \leq \mathfrak{M} \leq \mathcal{A}_{\omega}$ for some probability vector $\omega=(\omega_1,\ldots,\omega_n)$ and that $\sigma$ is an operator mean. Then
\begin{align*}
 K(h^p,-q/p)^{-1/q} \NORM{\mathfrak{M}_{\sigma}(A_1^p,\ldots,A_n^p)}_{\infty}^{1/p} & \leq \NORM{\mathfrak{M}_{\sigma}(A_1^q,\ldots,A_n^q)}_{\infty}^{1/q} \\
& \leq K(h^p,-q/p)^{1/q} \NORM{\mathfrak{M}_{\sigma}(A_1^p,\ldots,A_n^p)}_{\infty}^{1/p}
\end{align*}
for all $0<q<p$, where the generalized Kantorovich constant $K(h,p)$ is defined by \eqref{eq:K} and $h=M/m$. In particular, if $q\to 0$, then
\begin{align*}
 S(h^p)^{-1/p} \NORM{\mathfrak{M}_{\sigma}(A_1^p,\ldots,A_n^p)}_{\infty}^{1/p} & \leq \NORM{\exp\left( \sum_{j=1}^n \omega_j \log A_j \right)}_{\infty} \\
& \leq S(h^p)^{1/p} \NORM{\mathfrak{M}_{\sigma}(A_1^p,\ldots,A_n^p)}_{\infty}^{1/p}
\end{align*}
for all $p>0$, where the Specht ratio $S(h)$ is defined by \eqref{eq:sr}.
\end{theorem}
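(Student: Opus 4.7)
The plan is to reduce the norm inequality to the operator order inequality provided by Corollary~\ref{cor-ahr} and then pass from operators to numbers via the identity $\NORM{X^s}_\infty=\NORM{X}_\infty^s$ for $s>0$ and $X\in{\Bbb P}$. The advantage of working at the norm level is that this identity obviates any use of the L\"owner--Heinz inequality, so no Furuta-type correction factor $K(h^q,1/q)$ appears (unlike the operator order version in Theorem~\ref{th4-1}). Consequently a single clean estimate is obtained for all $0<q<p$, irrespective of whether $q<1$ or $q\geq 1$.

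Concretely, I apply Corollary~\ref{cor-ahr} to the tuple $(A_1^p,\ldots,A_n^p)$ rather than to $(A_1,\ldots,A_n)$. Since the spectra of the $A_j^p$ lie in $[m^p,M^p]$, the corresponding ratio is $h^p$; with the exponent $r=q/p\in(0,1)$ the corollary yields
\[
K(h^p,-q/p)^{-1}\mathfrak{M}_\sigma(A_1^p,\ldots,A_n^p)^{q/p}\leq\mathfrak{M}_\sigma(A_1^q,\ldots,A_n^q)\leq K(h^p,-q/p)\,\mathfrak{M}_\sigma(A_1^p,\ldots,A_n^p)^{q/p}.
\]
Taking $\NORM{\cdot}_\infty$ of every term (which is monotone on the positive cone) and using $\NORM{X^{q/p}}_\infty=\NORM{X}_\infty^{q/p}$ turns this display into a scalar sandwich around $\NORM{\mathfrak{M}_\sigma(A_1^q,\ldots,A_n^q)}_\infty$. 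Raising both ends to the $1/q$-th power, a monotone operation on $[0,\infty)$, is precisely the announced norm inequality.

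For the Specht ratio limit I let $q\to 0^+$ in the norm sandwich. The Lie--Trotter formula \eqref{eq:LTF} together with continuity of the norm sends the middle term to $\NORM{\exp(\sum_{j=1}^n\omega_j\log A_j)}_\infty$, while the asymptotic $K(h^r,s/r)\to S(h^s)$ as $r\to 0$ cited before Theorem~\ref{th4-1} gives $K(h^p,-q/p)^{1/q}\to S(h^p)^{1/p}$, exactly the scalar limit already used in the proof of Theorem~\ref{th4-1}. Combining these limits inside the sandwich yields the Specht ratio refinement. The only step that requires genuine care is the Kantorovich-to-Specht limit; everything else is a routine transfer from an operator order inequality to a norm inequality.
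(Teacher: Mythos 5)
Your proposal is correct and follows essentially the same route as the paper: the paper's proof likewise starts from the two-sided operator inequality obtained by applying Corollary~\ref{cor-ahr} with exponent $q/p\in(0,1)$ to the tuple $(A_1^p,\ldots,A_n^p)$ (its inequality \eqref{I} together with the opposite one, with constant $K(h^p,-q/p)$), then takes operator norms, uses $\NORM{X^{q/p}}_\infty=\NORM{X}_\infty^{q/p}$, and raises to the power $1/q$. The Specht ratio part is also handled identically, via the Lie--Trotter formula \eqref{eq:LTF} and the limit $K(h^p,-q/p)^{1/q}\to S(h^p)^{1/p}$ as $q\to 0$.
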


\begin{proof}
For $0<q<p$, we have $0<q/p<1$ and so it follows from  \eqref{I} and the opposite inequality that
\begin{align*}
K(h^p,-q/p)^{-1} \mathfrak{M}_{\sigma}(A_1^p,\ldots,A_n^p)^{q/p} & \leq
\mathfrak{M}_{\sigma}(A_1^q,\ldots,A_n^q)\\
&  \leq K(h^p,-q/p) \mathfrak{M}_{\sigma}(A_1^p,\ldots,A_n^p)^{q/p}
\end{align*}
and so
\begin{align*}
K(h^p,-q/p)^{-1} \NORM{\mathfrak{M}_{\sigma}(A_1^p,\ldots,A_n^p)^{q/p}}_{\infty}^{q/p} & \leq
\NORM{\mathfrak{M}_{\sigma}(A_1^q,\ldots,A_n^q)}_{\infty} \\
& \leq K(h^p,-q/p) \NORM{\mathfrak{M}_{\sigma}(A_1^p,\ldots,A_n^p)}_{\infty}^{q/p}.
\end{align*}
Hence, we arrive at the desired inequality:
\begin{align*}
K(h^p,-q/p)^{-1/q} \NORM{\mathfrak{M}_{\sigma}(A_1^p,\ldots,A_n^p)^{1/p}}_{\infty}^{1/p} & \leq
\NORM{\mathfrak{M}_{\sigma}(A_1^q,\ldots,A_n^q)}_{\infty}^{1/q} \\
& \leq K(h^p,-q/p)^{1/q} \NORM{\mathfrak{M}_{\sigma}(A_1^p,\ldots,A_n^p)}_{\infty}^{1/p}.
\end{align*}
\end{proof}

Let $\mathfrak{M}:{\Bbb P}^n \mapsto {\Bbb P}$ be an $n$-variable operator mean such that $\mathcal{H}_{\omega} \leq \mathfrak{M} \leq \mathcal{A}_{\omega}$ for some probability vector $\omega=(\omega_1,\ldots,\omega_n)$ and let $\sigma$ an operator mean. Let $A_1,\ldots,A_n \in {\Bbb P}$ such that $mI\leq A_j \leq MI$ \ $(j=1,\ldots,n)$ for some scalars $0<m<M$ and $h=M/m$. We remark that, by Theorem \ref{th4-1}, under the operator order it holds that
\begin{align}\label{rf}
S(h)^{-2}\mathfrak{M}_{\sigma}(A_1,\ldots,A_n)\leq \exp\left( \sum_{j=1}^n \omega_j \log A_j \right)\leq S(h)^{2}\mathfrak{M}_{\sigma}(A_1,\ldots,A_n).
\end{align}

On the other hand, under the norm inequality, we have
\begin{equation} \label{rfn}
S(h)^{-1} \NORM{\mathfrak{M}_{\sigma}(A_1,\ldots,A_n)}_{\infty}\leq \NORM{\exp\left( \sum_{j=1}^n \omega_j \log A_j \right)}_{\infty}\leq S(h)\NORM{\mathfrak{M}_{\sigma}(A_1,\ldots,A_n)}_{\infty}.
\end{equation}
Comparing \eqref{rf} and \eqref{rfn} indicates the difference between the operator order and the operator norm in terms of the Specht ratio.\par
If we put $\mathfrak{M}=\mathcal{H}_{\omega}$ and $\sigma=\s_{\a}$ in Theorem \ref{thpn1}, then $\mathfrak{M}_{\sigma}=(\mathcal{H}_{\omega})_{\s_{\a}}=P_{\omega, -\a}$ for $0<\a <1$ and we have
\[
\NORM{P_{\omega, -\a}(A_1^q,\ldots,A_n^q)}_{\infty}^{1/q} \leq K(h^p,-q/p)^{1/q} \NORM{P_{\omega, -\a}(A_1^p,\ldots,A_n^p)}_{\infty}^{1/p}
\]
for all $0<q<p$. If $\a \to 0$, then
\begin{equation} \label{eq:esp}
\NORM{G_{\omega}(A_1^q,\ldots,A_n^q)}_{\infty}^{1/q} \leq K(h^p,-q/p)^{1/q} \NORM{G_{\omega}(A_1^p,\ldots,A_n^p)}_{\infty}^{1/p}
\end{equation}
for all $0<q<p$. Moreover, if $q\to 0$, then
\begin{equation*}
\NORM{\exp\left( \sum_{j=1}^n \omega_j \log A_j \right)}_{\infty} \leq S(h^p)^{1/p} \NORM{G_{\omega}(A_1^p,\ldots,A_n^p)}_{\infty}^{1/p}
\end{equation*}
for all $p>0$. In \cite[Corollary 4.2]{FSY}, it was shown that
\begin{equation} \label{eq:ekp}
\NORM{G_{\omega}(A_1^q,\ldots,A_n^q)}_{\infty}^{1/q} \leq \left(\frac{(M^p+m^p)^2}{4M^pm^p}\right)^{1/p} \NORM{G_{\omega}(A_1^p,\ldots,A_n^p)}_{\infty}^{1/p}
\end{equation}
for all $p>0$.

If $0<q<p$, then Lemma \ref{lem-kan} implies that $K(h^p,-q/p)<K(h^p,-1)^{q/p} = \left(\frac{(M^p+m^p)^2}{4M^pm^p}\right)^{q/p}$ and so
\begin{align}\label{kk}
 K(h^p,-q/p)^{1/q}<\left(\frac{(M^p+m^p)^2}{4M^pm^p}\right)^{1/p}.
\end{align}
 This ensures that  inequality \eqref{eq:esp} is an improvement of  \eqref{eq:ekp}.  In addition, note that when $q\to 0$, inequality \eqref{kk} implies that
 $S(h^p)^{1/p}< \left(\frac{(M^p+m^p)^2}{4M^pm^p}\right)^{1/p}$ for all $p>0$.\par
\bigskip

\textbf{Acknowledgement.} The third author is partially supported by the Ministry of Education, Science, Sports and Culture, Grant-in-Aid for Scientific Research (C), JSPS KAKENHI Grant Number JP 19K03542.

\par
\medskip

\end{document}